\theoremstyle{plain}
\newtheorem{theorem}[equation]{Theorem}
\newtheorem{lemma}[equation]{Lemma}
\newtheorem{proposition}[equation]{Proposition}
\theoremstyle{definition}
\newtheorem{definition}[equation]{Definition}
\newtheorem{condition}[equation]{Condition}
\theoremstyle{remark}
\newtheorem{remark}[equation]{Remark}
\newtheorem{conclusion}[equation]{Conclusion}
\newcommand{\dv}{\operatorname{div}}
\newcommand{\supt}{\operatorname{supt}}
\numberwithin{equation}{section}
\newcommand{\bR}{\mathbb{R}}
\providecommand{\set}[1]{\{#1\}}
\providecommand{\abs}[1]{\lvert#1\rvert}
\providecommand{\Abs}[1]{\left\lvert#1\right\rvert}
\providecommand{\norm}[1]{\lVert#1\rVert}
\renewcommand{\vec}[1]{\boldsymbol{#1}}
\def\dVert{\,\,\text{--}\kern-.46em\|}
\providecommand{\dashnorm}[1]{\dVert#1\Vert}
\begin{document}
\title[Harnack inequality]
{Harnack inequality for parabolic equations in double-divergence form with singular lower order coefficients}

\author[I. Gy\"ongy]{Istvan Gy\"ongy}
\address[I. Gy\"ongy]{School of Mathematics, University of Edinburgh, James Clerk Maxwell Building, Peter Guthrie Tait Road, Edinburgh, EH9 3FD, United Kingdom}
\email{I.Gyongy@ed.ac.uk}

\author[S. Kim]{Seick Kim}
\address[S. Kim]{Department of Mathematics, Yonsei University, 50 Yonsei-ro, Seodaemun-gu, Seoul 03722, Republic of Korea}
\email{kimseick@yonsei.ac.kr}
\thanks{S. Kim is supported by the National Research Foundation of Korea (NRF) under agreement NRF-2022R1A2C1003322.}

\subjclass[2010]{Primary 35B45, 35B65; Secondary 35K10}

\keywords{Harnack inequality, parabolic equations, double divergence form}

\begin{abstract}
This paper investigates the Harnack inequality for nonnegative solutions to second-order parabolic equations in double divergence form. We impose conditions where the principal coefficients satisfy the Dini mean oscillation condition in $x$, while the drift and zeroth-order coefficients belong to specific Morrey classes.
Our analysis contributes to advancing the theoretical foundations of parabolic equations in double divergence form, including Fokker-Planck-Kolmogorov equations for probability densities.
\end{abstract}
\maketitle

\section{Introduction and main results}

We consider a parabolic operator in double divergence form
\begin{equation*}			
\mathscr{L}^*u:=-\partial_t u + \dv^2 (\mathbf A u) -\dv(\vec b u) = -\partial_t u + D_{ij}(a^{ij} u) - D_i(b^i u),
\end{equation*}
where $\mathbf A=(a^{ij})$ and $d \times d$ symmetric matrix valued function and $\vec b=(b^1, \ldots, b^d)$ is a vector valued function defined on $\bR^{d+1}=\bR\times \bR^d$.
Here, the usual summation convention is adopted.
We note that the operator $\mathscr{L}^*$ is the formal adjoint of $\mathscr{L}$, where
\[
\mathscr{L} v := \partial_t v + a^{ij}D_{ij}v + b^i D_i v.
\]

Let $c$ be a real valued function on $\bR^{d+1}$.
In this article, we are interested in Harnack's inequality for nonnegative solutions $u$ to
\[
\mathscr{L}^*u-cu=0.
\]

We assume that the principal coefficients matrix $\mathbf A=(a^{ij})$ is symmetric and satisfy the uniform ellipticity condition.
We also assume that $\mathbf A$ is of Dini mean oscillation in $x$.
This condition is stronger than $\mathbf A$ belonging  to $\mathrm{VMO}_x$ but is weaker than $\mathbf A$ being Dini continuous in $x$.
We allow the coefficient $b^i$ and $c$ to be singular.
Refer to Section~\ref{sec2} for the precise conditions on the coefficients.

An important example of parabolic equations in double divergence form is parabolic Fokker-Planck-
Kolmogorov equations for densities.
Interested readers are asked to refer to an excellent survey book on this subject \cite{BKRS15}.

The main theorem of this paper is as follows.
\begin{theorem}		\label{thm_main}
Assume Conditions \ref{cond1} and \ref{cond2} hold.
Let $R>0$ be a fixed number, $0<r<R/4$, and $(t_0, x_0) \in \bR^{d+1}$.
Denote $C_{r}=(t_0-r^2, t_0] \times B_{r}(x_0)$. 
Suppose $u \in L_1(C_{4r})$ is a nonnegative solution of
\[
\mathscr{L}^* u - cu =0\;\;\text{ in }\;\; C_{4r}.
\]
Then, we have
\begin{equation}			\label{harnack}
\sup_{(t_0-3r^2, t_0-2r^2)\times B_{r/3}(x_0)} u \le N \inf_{(t_0-r^2, t_0)\times B_{r/3}(x_0)} u,
\end{equation}
where $N$ is a constant depending only on $d$, $\delta$, $\omega_{\mathbf A}^{\mathsf x}$, $p$, $q$, $\beta$, $\mathfrak{b}$, $\mathfrak{c}$, and $R$.
\end{theorem}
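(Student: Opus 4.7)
The plan is to exploit the duality between $\mathscr{L}^*$ and the non-divergence form operator $\mathscr{L}$. Since the principal part $\mathbf{A}$ is of Dini mean oscillation in $x$, solutions of $\mathscr{L}v=f$ enjoy $W^{1,2}_{p,\loc}$ estimates and a quantitative Aleksandrov--Bakelman--Pucci (ABP) estimate, while the Morrey hypotheses on $\vec b,c$ are absorbed through Adams-type inequalities with constants controlled by $\mathfrak b,\mathfrak c$ on scales up to $R$. Following the classical Moser program adapted to the adjoint setting, the aim is to prove (i) a sup bound on the upper sub-cylinder, (ii) a weak-type $L^{p_0}$ lower bound in terms of the infimum on the lower sub-cylinder, and (iii) link them through a chain of overlapping cylinders.

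For step (i), I would establish
\[
\sup_{(t_0-3r^2,\,t_0-2r^2)\times B_{r/3}(x_0)} u \;\le\; \frac{N}{r^{d+2}}\int_{C_{2r}} u,
\]
by iterating a local $L^1$--$L^\infty$ estimate. Because one cannot pointwise differentiate $\mathbf A$, the iteration is carried out by testing the identity $\int u\,(\mathscr{L}\varphi - c\varphi)=0$ against cutoff functions $\varphi$ manufactured from the $W^{1,2}_p$-Calderón--Zygmund theory for $\mathscr{L}$ (available precisely because $\mathbf A$ is of Dini mean oscillation), and absorbing the lower-order Morrey terms.

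The core of the argument is step (ii): showing that for some $p_0>0$,
\[
\left(\frac{1}{|Q|}\int_{Q} u^{p_0}\right)^{1/p_0} \;\le\; N \inf_{(t_0-r^2,\,t_0)\times B_{r/3}(x_0)} u
\]
on a suitable intermediate cylinder $Q$. Since the standard energy method is unavailable for $\mathscr{L}^*$, I would run a Krylov--Safonov-style measure-decay argument on the adjoint side: for each base point $(\bar t,\bar x)$ of the inf-cylinder and each sub-cylinder $Q'$ lying in its past, solve the backward problem $\mathscr{L}v - cv = -\chi_{Q'}$ with zero final data at time $\bar t$, and pair $v$ against $u$ through integration by parts to obtain a pointwise representation of the form
\[
u(\bar t,\bar x) \;\ge\; c_0 \int_{Q'} u \;-\; (\text{boundary error}).
\]
A parabolic Calderón--Zygmund decomposition then converts this into an $\varepsilon$-measure decay for the super-level sets $\{u>\lambda\}$, from which the weak $L^{p_0}$ bound follows by standard iteration.

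Composing (i) and (ii) and chaining finitely many overlapping cylinders to bridge the time gap $(t_0-2r^2,\,t_0-r^2)$ then yields \eqref{harnack}. I expect the principal obstacle to lie in step (ii): producing the $\varepsilon$-measure decay with constants independent of $u$ requires a genuinely uniform ABP estimate for $\mathscr{L}$ in the Dini-mean-oscillation / Morrey regime, together with careful bookkeeping of how the Morrey norms of $\vec b$ and $c$ rescale under parabolic dilations on balls of radius up to $R$ --- which is precisely the source of the $R$-dependence in the constant $N$.
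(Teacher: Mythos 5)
Your proposal takes a fundamentally different --- and much harder --- route than the paper, and the central step as stated contains a gap.

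The paper does \emph{not} run a Moser or Krylov--Safonov program on the adjoint equation. Instead it performs three explicit reductions: (a) it regularizes the singular zeroth-order coefficient $c$ by solving $(\mathscr{L}-c-\lambda)\zeta_\lambda=c$ in $\bR^{d+1}$ (using \cite[Theorem 3.5]{Krylov2023}) and replacing $u$ by $\bar u=\zeta u$ with $\zeta=1+\zeta_\lambda$, which turns the zeroth-order coefficient into a bounded H\"older-continuous $\bar c$; (b) it removes the singular drift via Zvonkin's transform, solving $(\mathscr{L}-\lambda)\vec\xi_\lambda=-\vec b$, changing variables by $\vec\phi_\lambda(t,x)=x+\vec\xi_\lambda(t,x)$, and pushing $u$ forward to $\hat u$, which now satisfies an equation with bounded Lipschitz drift $\hat{\vec b}$ and bounded H\"older $\hat c$; (c) it absorbs the now-regular lower-order terms into the principal part by lifting to $\bR^{d+2}$ with an auxiliary variable $y$ and modified $\tilde a^{ij}$, reducing to the known Harnack inequality for $-\partial_t u+\dv^2(\tilde{\mathbf A}u)=0$ (Theorem~\ref{thm:harnack}, which is extracted from \cite{DEK21}). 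The novelty is precisely that one never has to redo the Krylov--Safonov machinery for the double-divergence operator; one reduces to the clean case already established.

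The gap in your step (ii): if $v$ solves $\mathscr{L}v-cv=-\chi_{Q'}$ with zero final data at time $\bar t$ and $u$ solves $\mathscr{L}^*u-cu=0$, then integration by parts over a space-time cylinder yields
\[
\int_{Q'} u \;=\; \text{(boundary integrals of $u$ weighted by $v$ and $Dv$)},
\]
not a pointwise lower bound on $u(\bar t,\bar x)$. The final-time boundary term vanishes because $v(\bar t,\cdot)=0$, so what remains is an integral over the lateral and initial-time boundary, not an evaluation of $u$ at a single point. To extract $u(\bar t,\bar x)$ from duality one must pair against the fundamental solution of $\mathscr{L}$ with pole at $(\bar t,\bar x)$ --- i.e.\ the adjoint Green's function --- and then control its doubling and decay properties, which is exactly the hard work done in \cite{DEK18, DEK21}. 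As written, your mechanism cannot produce the inequality $u(\bar t,\bar x)\ge c_0\int_{Q'}u$, and this is the crux of the argument. Even setting that aside, your steps (i) and (ii) would have you reconstruct the entire Harnack inequality for double-divergence equations from scratch while simultaneously tracking Morrey-norm lower-order terms, whereas the paper shows that two PDE transforms plus a dimension-lifting trick let you import the result wholesale. In particular, the improvement $c\in L_{p,\mathrm{loc}}$ with $p>d/2$ (rather than $p>d$ as in \cite{BRS23}) drops out cleanly from the $\zeta$-transform; your direct approach would have to re-derive it.

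One smaller point: your step (i) proposes to establish the $L^1$-to-$L^\infty$ bound by ``testing against cutoff functions manufactured from the $W^{1,2}_p$ Calder\'on--Zygmund theory for $\mathscr{L}$.'' This is essentially the Brezis/Serrin duality argument, which the paper uses (Lemma~\ref{lem1916sun}) only to show $u\in L_{p,\mathrm{loc}}$ for every $p<\infty$; it does not yield $L^\infty$ directly, and the $L^\infty$ bound in \cite{DEK21} requires the full machinery of Green's-function estimates under the DMO condition.
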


A few remarks are in order.
Harnack inequalities serve as crucial tools in the study of elliptic and parabolic 
partial differential equations of the second order.
For divergence form elliptic equations with measurable coefficients, 
Moser \cite{Moser61} proved the Harnack inequality for $W^{1}_{2}$ weak solutions, 
which he later extended to the parabolic setting \cite{Moser64}.
For non-divergence form parabolic equations with measurable coefficients, 
Krylov and Safonov \cite{KS79, KS80} established the Harnack inequality for $W^2_{d+1}$ strong solutions.
The elliptic counterpart of the Krylov-Safonov Harnack inequality is particularly discussed in \cite{Safonov80}.
In contrast to equations in divergence or non-divergence form, 
Harnack inequalities for double divergence form equations require more than a measurability condition.
This is mainly due to the fact that weak solutions to equations 
in double divergence form are not necessarily bounded 
when the coefficients $\mathbf A$ lack some control on the modulus of continuity.
A counterexample in the elliptic setting is provided in \cite{Bauman84b}.
However, if the coefficients $\mathbf{A}$ are H\"older or Dini continuous, 
weak solutions of double divergence form equations are locally bounded and even continuous.
See \cite{Sjogren73, Sjogren75} for elliptic equations in double divergence form 
with H\"older continuous coefficients.
Recently, it has been shown that if $\mathbf{A}$ has Dini mean oscillation, 
then weak solutions to double divergence form equations are continuous. 
Refer to \cite{DK17} for elliptic equations and \cite{DEK21} for parabolic equations.
The Harnack inequality for nonnegative solutions to the elliptic 
equation $\mathrm{div}^2(\mathbf{A} u)=0$ with $\mathbf A$ belonging 
to the Dini mean oscillation class was presented in \cite{DEK18}.
In the parabolic setting, a corresponding result was presented in \cite{DEK21}, 
albeit somewhat implicitly.
Recently, in \cite{BRS23}, the Harnack inequality was established 
for elliptic equations in the form $\mathrm{div}^2(\mathbf A u) -\mathrm{div}(\vec b u)+cu=0$, 
with $\mathbf A$ satisfying the Dini mean oscillation condition 
and $\vec b$, $c \in L_{p, \mathrm{loc}}$ for $p>d$.
Our main theorem, Theorem \ref{thm_main}, extends the result 
established in \cite{BRS23} to the parabolic context with less restrictive assumptions.
Notably, we improve upon the conditions required in \cite{BRS23} 
by only requiring $c \in L_{p, \mathrm{loc}}$ with $p>d/2$.
For detailed information, refer to Theorem \ref{thm2}.

It should be noted that there is another significant difference 
between equations in divergence or non-divergence form and those in double divergence form.
In the former case, the Harnack inequality implies H\"older continuity estimates for solutions.
However, in the latter case, H\"older estimates for solutions are not available in general.
This is essentially because the constant function is not a solution to the elliptic 
or the parabolic equation in double divergence form.

The organization of the paper is as follows:
In Section \ref{sec2}, we introduce notations and present some preliminary results.
In particular, we state the Harnack inequality for nonnegative solutions to parabolic equations in double divergence form without lower-order coefficients, which was not explicitly presented in \cite{DEK21}, as Theorem \ref{thm:harnack}.
In Section \ref{sec3}, we provide the proof of our main result, Theorem \ref{thm_main}, by breaking it into several steps.
The last section, Section \ref{sec4}, is a brief one devoted to the Harnack inequality for elliptic equations in double divergence form.
\section{Preliminaries}			\label{sec2}

We represent a point in $\mathbb{R}\times \mathbb{R}^d=\mathbb{R}^{d+1}$ as $X=(t,x)$, where $x = (x^1,\ldots, x^d)$ always denotes a point in $\mathbb{R}^d$.
The parabolic distance between the points $X=(t,x)$ and $Y=(s,y)$ in $\mathbb{R}^{d+1}$ is defined as
\[
\abs{X-Y}_{p}=\max(\sqrt{\abs{t-s}}, \abs{x-y}).
\]
Let $Q \subset \bR^{d+1}$. For $0<\alpha\le 1$, we define
\[
[u]_{\mathscr{C}^{\alpha/2,\alpha}(Q)} = \sup_{\substack{X, Y \in Q\\X\neq Y}}\frac{\abs{u(X)-u(Y)}}{\abs{X-Y}_{p}^\alpha}.
\]
Also, for a function $u$ having a spatial derivative $Du$, we define
\[
[u]_{\mathscr{C}^{(1+\alpha)/2,1+\alpha}(Q)}=[Du]_{\mathscr{C}^{\alpha/2,\alpha}(Q)}+\sup_{\substack{(t,x), (s,x) \in Q\\t\neq s}}\frac{\abs{u(t,x)-u(s,x)}}{\abs{t-s}^{(1+\alpha)/2}}.
\] 

For $p$, $q \in [1,\infty)$ and a domain $Q\subset \bR^{d+1}$, we denote by $L_{p,q}(Q)$ the space of measurable functions $f$ on $Q$ with
\[
\norm{f}_{L_{p,q}(Q)} = \norm{f \mathbbm{1}_Q}_{L_{p,q}} = \left(\int_{\bR} \left(\int_{\bR^d} \abs{f \mathbbm{1}_Q(t,x)}\,dx\right)^{q/p}\,dt\right)^{1/q}<\infty.
\]
When $p$ or $q=\infty$, we use essential supremum instead of the integral. 
We note that $L_{p,p}(Q)=L_p(Q)$, the usual Lebesgue class.

By $W^{1,2}_{p,q}(Q)$, we mean the collection of $u$ such that $\partial_t u$, $D^2u$, $Du$, $u \in L_{p,q}(Q)$.
The norm in $W^{1,2}_{p,q}(Q)$ is introduced in an obvious way.
We drop $Q$ if $Q=\bR^{d+1}$.

For $X=(t, x) \in \bR^{d+1}$ and $r>0$, we define $C_r(X)=(t-r^2, t]\times B_{r}(x)$, where $B_r(x)$ is the $d$ dimensional open ball with radius $r$ centered at $x$.
Let us also introduce the forward cylinder $\tilde C_r(X)=\tilde C_r(t,x)=[t, t+r^2)\times B_r(x)$ and let $\mathbb C_r$ be the collection of all cylinders $\tilde C_r(X)$ in $\bR^{d+1}$.

For $\beta \ge 0$, we define the Morrey space $E_{p,q,\beta}$ as in \cite{Krylov2023}, i.e., the set of all functions $g\in L_{p,q,\rm{loc}}$ such that
\begin{equation}			\label{eq_morrey}
\norm{g}_{E_{p,q,\beta}}:=\sup_{\rho \le 1, \;C \in \mathbb{C}_\rho} \rho^{\beta} \dashnorm{g}_{L_{p,q}(C)}<\infty,
\end{equation}
where $\dashnorm{g}_{L_{p,q}(C)}=\norm{1}_{L_{p,q}(C)}^{-1}\norm{g}_{L_{p,q}(C)}=N(d)\rho^{-d/p-2/q}\norm{g}_{L_{p,q}(C)}$ for $C \in \mathbb{C}_\rho$.
Define
\[
E^{1,2}_{p,q,\beta}=\set{u: u,\, Du,\, D^2 u,\, \partial_t u \in E_{p,q,\beta}}
\]
and equip $E^{1,2}_{p,q,\beta}$ with an obvious norm.

For $C=\tilde C_r(t_0,x_0) \in \mathbb C_r$, we set
\[
\bar{\mathbf A}^{\textsf x}_{C}(t)=\fint_{B_r(x_0)} \mathbf A(t,x)\,dx,
\]
and for $r>0$, we define
\[
\omega_{\mathbf A}^{\textsf x}(r):=\sup_{C \in \mathbb{C}_r}\fint_{C} \,\Abs{\mathbf A(t,x)- \bar{\mathbf A}^{\textsf x}_{C}(t)}\,dxdt.
\]

\begin{condition}			\label{cond1}
The principal coefficient matrix $\mathbf A=(a^{ij})$ is symmetric and there is a constant $\delta \in (0,1]$ such that the eigenvalues of $\mathbf A(t,x)$ are in $[\delta, 1/\delta]$ for all $(t,x) \in \mathbb{R}^{d+1}$.
Moreover, $\mathbf{A}$ is of Dini mean oscillation in $x$, that is,
\[
\int_0^{1} \frac{\omega_{\mathbf A}^{\textsf x}(t)}{t}\,dt <\infty.
\]
\end{condition}

We shall denote $\mathbf A \in \mathrm{DMO}_x$ if $\mathbf A$ is of Dini mean oscillation in $x$.
We observe that $\mathbf A \in \mathrm{VMO}_x$ if $\mathbf A \in \mathrm{DMO}_x$; refer to \cite{Krylov2007} for definition of $\mathrm{VMO}_x$.

\begin{condition}			\label{cond2}
Let  $p$, $q$, and $\beta$ be such that
\[
p,q \in (1,\infty),\quad \beta \in (0,1),\quad d/p+2/q \ge \beta,
\]
and denote
\[
\tilde p=\beta p /(\beta+1),\quad \tilde q=\beta q /(\beta+1).
\]
The lower-order coefficients $\vec b=(b^1,\ldots, b^d)$ and $c$ are such that $\vec b \in E_{p,q,\beta}$ and $c \in E_{\tilde p,\tilde q,\beta+1}$ with estimates
\[
\norm{\vec b}_{E_{p,q,\beta}}<\mathfrak{b},\quad \norm{c}_{E_{\tilde p,\tilde q,\beta+1}}<\mathfrak{c},
\]
for some $\mathfrak{b}$, $\mathfrak{c}<\infty$.
\end{condition}

\begin{remark}
Let $p$, $q \in (1,\infty)$ be such that $d/p+2/q<1$ and let $\beta=d/p+2/q$.
Clearly, for $C \in \mathbb{C}_\rho$, we have
\[
\rho^\beta \dashnorm{g}_{L_{p,q}(C)} \le N(d) \norm{g}_{L_{p,q}(\bR^{d+1})}.
\]
Moreover, it follows from H\"older's inequality that for $\rho \le 1$, we have
\[
\rho^{1+\beta} \dashnorm{g}_{L_{\tilde p,\tilde q}(C)} \le \rho^{2\beta} \dashnorm{g}_{L_{\tilde p,\tilde q}(C)} \le \rho^{2\beta} \dashnorm{g}_{L_{p/2,q/2}(C)} \le N(d) \norm{g}_{L_{p/2,q/2}(\bR^{d+1})}.
\]
Therefore, if $\vec b \in L_{p,q}$ and $c \in L_{p/2,q/2}$, then Condition \ref{cond2} is satisfied with $\beta=d/p+2/q$.
\end{remark}

\begin{definition}
Let $Q \subset \bR^{d+1}$ be a domain.
We say that $u \in L_{1,\mathrm{loc}}(Q)$ is a weak solution of
\[
\mathscr{L}^*u -c u =0 \quad\text{in }\; Q
\]
if $\vec b u$, $cu \in L_{1,\mathrm{loc}}(Q)$, and the following identity holds for any $\eta \in C^{\infty}_c(Q)$:
\begin{align}			
						\nonumber
I&:=\iint_{\bR^{d+1}} u \left(\mathscr{L} \eta -c\eta\right) dxdt\\
						\label{eq2107mon}
&=\iint_{\bR^{d+1}} u \partial_t \eta  + a^{ij} u D_{ij}\eta+ b^i u D_i\eta - cu\eta \,dxdt =0.
\end{align}
\end{definition}

The following lemma is an easy consequence of \cite[Theorem 3.2]{DEK21} by following essentially the same the arguments used in \cite{Brezis}; see also \cite{DK17}.
\begin{lemma}			\label{lem1916sun}
Assume Condition \ref{cond1}.
Let $C_r= (-r^2,0]\times B_r \subset \bR^{d+1}$ and let $u \in L_{1}(C_{2r})$ be a solution to $-\partial_t u+ \dv^2(\mathbf A u)=f$ in $C_{2r}$, where $f \in L_p(C_{2r})$ for some $p \in (1,\infty)$, then $u \in L_{p}(C_{r})$ and we have
\[
\norm{u}_{L_p(C_r)} \le N \left(\norm{u}_{L_1(C_{2r})}+ \norm{f}_{L_p(C_{2r})}\right),
\]
where $N$ depends only on $d$, $\delta$, $p$, $r$, and $\omega_{\mathbf A}^{\textsf x}$.
\end{lemma}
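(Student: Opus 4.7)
The plan is to run a duality argument against the adjoint non-divergence equation, in the style of \cite{Brezis} and its elliptic implementation in \cite{DK17}. Set $p' = p/(p-1)$. For each test function $g \in C_c^\infty(C_r)$, extended by zero, I would first solve the forward Cauchy problem
\[
\partial_t v + a^{ij}D_{ij}v = g \;\text{ in }\; (-4r^2, 0)\times\bR^d, \qquad v(-4r^2,\cdot) = 0.
\]
Under Condition \ref{cond1}, the $W^{1,2}_{p'}$-solvability of this non-divergence problem with $\mathrm{DMO}_x$ coefficients, which is exactly what \cite[Theorem 3.2]{DEK21} supplies, yields a unique $v \in W^{1,2}_{p'}$ obeying $\norm{v}_{W^{1,2}_{p'}} \le N\norm{g}_{L_{p'}}$.

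Next, pick a smooth cutoff $\chi$ supported in $C_{2r}$ with $\chi \equiv 1$ on $C_r$, arranged so that (after the customary mollification of $v$) the product $v\chi$ is admissible as a test function on $C_{2r}$. A direct Leibniz computation gives
\[
\partial_t(v\chi) + a^{ij}D_{ij}(v\chi) = \chi g + v\partial_t\chi + 2a^{ij}D_i v\,D_j\chi + v a^{ij}D_{ij}\chi,
\]
and substituting $\eta = v\chi$ into the weak identity $\iint u(\partial_t\eta + a^{ij}D_{ij}\eta) = \iint f\eta$ yields
\[
\iint_{C_r} u g = \iint f v\chi - \iint u\bigl(v\partial_t\chi + 2a^{ij}D_iv\,D_j\chi + va^{ij}D_{ij}\chi\bigr).
\]
The first integral on the right is bounded by $N\norm{f}_{L_p(C_{2r})}\norm{g}_{L_{p'}}$ via H\"older and the $W^{1,2}_{p'}$ estimate. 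The commutator integrals live on the annular region $C_{2r}\setminus C_r$, where $g \equiv 0$; thus $v$ solves the homogeneous equation $\partial_t v + a^{ij}D_{ij}v = 0$ there, and interior regularity for non-divergence parabolic equations with $\mathrm{DMO}_x$ coefficients furnishes pointwise bounds $\norm{v}_{L_\infty} + \norm{Dv}_{L_\infty} \le N(r)\norm{g}_{L_{p'}}$ on that region. Hence these commutator integrals are bounded by $N(r)\norm{u}_{L_1(C_{2r})}\norm{g}_{L_{p'}}$.

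Combining the two bounds shows that $g \mapsto \iint u g$ is a bounded linear functional on $L_{p'}(C_r)$ with norm at most $N\bigl(\norm{u}_{L_1(C_{2r})} + \norm{f}_{L_p(C_{2r})}\bigr)$, and the Riesz representation theorem promotes this to $u \in L_p(C_r)$ with the claimed estimate. I expect the main obstacle to be verifying the two regularity ingredients in the $\mathrm{DMO}_x$ setting---the $W^{1,2}_{p'}$ solvability of the adjoint Cauchy problem (from \cite[Theorem 3.2]{DEK21}) and the interior $L_\infty$ bounds on $v$ and $Dv$ away from the support of $g$---together with the technical task of approximating $v\chi$ by admissible test functions and handling the shared top of the cylinders $C_r$ and $C_{2r}$ (which one can manage by a slight temporal buffer or by standard mollification in time).
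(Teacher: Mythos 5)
Your overall strategy --- a duality argument testing the weak identity against $\chi v$ where $v$ solves the adjoint non-divergence equation --- is the same as the paper's, but the way you handle the key pointwise bounds on $v$ is genuinely different, and there is a concrete error and a gap along the way.

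First, the direction of your auxiliary problem is reversed. The operator $\partial_t + a^{ij}D_{ij}$ is \emph{backward} parabolic: the Cauchy problem with initial data prescribed at the bottom $t=-4r^2$ is ill-posed. The well-posed problem carries a terminal condition $v(0,\cdot)=0$ at the top of the cylinder (equivalently, as the paper does, an initial--boundary value problem on $C_2$ with vanishing data on the \emph{backward} parabolic boundary, namely the lateral side together with $\set{0}\times B_2$). This is a small fix but a real one, and it interacts with your proposed ``temporal buffer at the shared top'': once the terminal data sit at $t=0$, the tops of $C_r$ and $C_{2r}$ are exactly where $v$ is anchored to zero, and a cutoff $\chi$ that is $\equiv 1$ on a \emph{neighborhood} of $\overline{C_r}$, supported compactly inside $C_{2r}$, is needed so that $\supt D\chi$, $\supt\partial_t\chi$ stay a fixed positive parabolic distance from $\overline{C_r}$. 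As written (``$\chi\equiv 1$ on $C_r$'') the annular region $C_{2r}\setminus C_r$ abuts $\partial C_r$, and since $\supt g$ can approach $\partial C_r$ from inside, the constant in the interior $W^{1,2}_\infty$ estimate for $v$ on that region is \emph{not} uniform in $g$, which breaks the duality. With the buffered cutoff, the homogeneous region $C_{2r}\setminus\overline{C_r}$ is fixed once and for all, the commutator support sits compactly inside it, and the interior estimate constant depends only on $d$, $\delta$, $\omega_{\mathbf A}^{\textsf x}$, $r$ --- that repair closes the gap.

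Second, and this is the genuine difference from the paper: you aim to produce $u\in L_p(C_r)$ in a single duality pass, for which you need $\norm{v}_{L_\infty}+\norm{Dv}_{L_\infty}$ on the annulus controlled by $\norm{g}_{L_{p'}}$, and you obtain this from \emph{interior} DMO$_x$ regularity away from $\supt g$. The paper avoids interior estimates entirely: it invokes \emph{global} $W^{1,2}_q$ solvability on $C_2$ with $q>d+2$ (valid since $\mathrm{DMO}_x\subset\mathrm{VMO}_x$) plus Sobolev embedding, which yields $\norm{v}_{L_\infty(C_2)}+\norm{Dv}_{L_\infty(C_2)}\le N\norm{g}_{L_q}$ on all of $C_2$ with a constant manifestly uniform in $g$. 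The price is that this only gives $u\in L_{q'}(C_1)$ for some $q'<\tfrac{d+2}{d+1}$, after which the paper quotes the self-improvement argument of \cite{Brezis}/\cite{EM2017} (``it is enough to show $u\in L_q$ for some $q>1$''). So the paper trades your interior-regularity ingredient for a gain-a-little-then-bootstrap scheme. Your route, once the cutoff and the direction of time are fixed, is also viable and somewhat more self-contained (no appeal to the bootstrap), but it requires more from the DMO$_x$ theory (interior $C^{1}$-type bounds for the adjoint equation away from the source) and more care with the shared top of the cylinders.
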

\begin{proof}
We can assume $r=1$.
Also, it is enough to show that $u \in L_{q}(C_1)$ for some $q>1$; see \cite{EM2017}.
By definition of a weak solution, we have
\begin{equation}				\label{eq1602wed}
\iint_{C_2} u \partial_t \eta +a^{ij}u D_{ij} \eta =\iint_{C_2} f \eta,\quad \forall \eta \in C^{\infty}_c(C_2).
\end{equation}
For any $g \in C^{\infty}_c(C_2)$, let $v \in W^{1,2}_2(C_2)$ be the solution of the problem
\begin{equation}				\label{eq1603wed}
v_t +a^{ij}D_{ij} v = g \;\text{ in }\;C_2
\end{equation}
with zero boundary condition on the parabolic boundary of $C_2$.
By \cite[Theorems 1.2]{DEK21} (see also \cite[Theorem 3.2]{DEK21}), we have $v \in W^{1,2}_\infty(C_{3/2})$.
Let $v^{(\varepsilon)}$ be the standard mollification of $v$.
For any fixed cut-off function $\zeta \in C^\infty_c(C_{3/2})$ such that $\zeta=1$ on $C_1$, we take $\eta=\zeta v^{(\varepsilon)} \in C^{\infty}_c(C_2)$.
Then by \eqref{eq1602wed}, we have
\begin{equation}				\label{eq1450thu}
\iint_{C_2} u \left(\zeta_t v^{(\varepsilon)} + \zeta v^{(\varepsilon)}_t\right) +a^{ij}u \left(D_{ij} \zeta v^{(\varepsilon)} + 2 D_i\zeta D_j v^{(\varepsilon)} + \zeta D_{ij}v^{(\varepsilon)}\right) =\int_{C_2} f \zeta v^{(\varepsilon)}.
\end{equation}
There is a sequence $\varepsilon_n$ converging to $0$ such that
\[
v^{(\varepsilon_n)} \to v, \quad D v^{(\varepsilon_n)} \to Dv,\quad D^2 v^{(\varepsilon_n)} \to D^2 v, \quad v^{(\varepsilon_n)}_t \to v_t\quad\text{a.e.}
\]
Also, note that for all sufficiently small $\varepsilon>0$ we have
\[
\norm{v^{(\varepsilon)}}_{W^{1,2}_\infty(\supt \zeta)} \le \norm{v}_{W^{1,2}_\infty(C_{3/2})}<\infty.
\]
Hence, by using the dominated convergence theorem, we find that \eqref{eq1450thu} is valid with $v^{(\varepsilon)}$ replaced by $v$.
On the other hand, by multiplying \eqref{eq1603wed} with $\zeta u$ and noting that $v_t$, $D^2 v$, $g \in L_\infty(\supt \zeta)$ and $\zeta u \in L_1(C_2)$,  we have
\begin{equation}			\label{eq1627thu}
\iint_{C_2} v_t \zeta u + a^{ij} D_{ij}v \zeta u  = \iint_{C_2} g \zeta u.
\end{equation}
By combing \eqref{eq1627thu} and \eqref{eq1450thu} with $v$ in place of $v^{(\epsilon)}$, we obtain
\[
\iint_{C_2} \zeta u g = \iint_{C_2} \zeta_t u v + a^{ij} D_{ij}\zeta uv+ 2a^{ij}u D_i\zeta D_jv+ \iint_{C_2} f \zeta v.
\]
Note that we can choose $\zeta$ such that
\[
\norm{\zeta}_\infty+ \norm{\zeta_t}_\infty+ \norm{D^2 \zeta}_\infty + \norm{D\zeta}_\infty \le 64.
\]
Therefore, we have
\begin{equation}			\label{eq1650thu}
\Abs{\iint_{C_2} \zeta u g\,dxdt} \le N\left( \norm{v}_{L_\infty(C_2)} +\norm{Dv}_{L_\infty(C_2)}\right) \norm{u}_{L_1(C_2)} + N \norm{f}_{L_p(C_2)} \norm{v}_{L_\infty(C_2)}.
\end{equation}
Since $a^{ij} \in \mathrm{DMO}_x \subset \mathrm{VMO}_x$, we have for $1<q<\infty$ that
\[
\norm{v}_{W^{1,2}_q(C_2)} \le N \norm{g}_{L_q(C_2)}.
\]
Then by the Sobolev embedding (see, for instance, \cite{LSU}, and also Remark~\ref{rmk1909sun}), we have for $q>d+2$ that
\[
\norm{v}_{L_\infty(C_2)}+ \norm{Dv}_{L_\infty(C_2)} \le N \norm{g}_{L_q(C_2)}.
\]
Since $g \in C^\infty_c(C_2)$ is arbitrary, by combining the previous inequality with \eqref{eq1650thu} and using the converse of H\"older's inequality, we have
\[
\norm{u}_{L_{q'}(C_1)}\le \norm{\zeta u}_{L_{q'}(C_2)}  \le N \left( \norm{u}_{L_1(C_2)}+ \norm{f}_{L_p(C_2)} \right),
\]
where $1<q' <\frac{d+2}{d+1}$.
\end{proof}

The following lemma complements \cite[Lemma 2.8]{Krylov2023}.
\begin{lemma}			\label{lem16.48sat}
Let $0<\beta \le d/p+2/q$ and $\beta<1$.
Then for any $u \in E^{1,2}_{p,q,\beta}(\bR^{d+1})$ and $C \in \mathbb{C}_1$, we have
\[
[u]_{\mathscr{C}^{(2-\beta)/2,2-\beta}(C)} \le \left(\norm{D^2u}_{E_{p,q,\beta}}+\norm{\partial_t u}_{E_{p,q,\beta}}\right).
\]
\end{lemma}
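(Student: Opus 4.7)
The plan is to apply a Campanato-type characterization of parabolic H\"older spaces of order in $(1,2)$. Since $2-\beta\in(1,2)$, one has, on a fixed cylinder $C\in\mathbb{C}_1$, the equivalence
\[
[u]_{\mathscr{C}^{(2-\beta)/2,\,2-\beta}(C)}\;\asymp\;\sup_{C_\rho(X_0)\subset C,\;\rho\le 1}\;\rho^{-(2-\beta)}\inf_{\ell}\fint_{C_\rho(X_0)}|u-\ell|\,dX,
\]
where the infimum runs over functions $\ell$ affine in $x$ and constant in $t$. Granting this, it suffices to exhibit, for every $C_\rho(X_0)$ with $\rho\le 1$, an affine $\ell(x)=a+b\cdot(x-x_0)$ satisfying
\[
\fint_{C_\rho(X_0)}|u-\ell|\,dX\;\le\; N\rho^{2-\beta}\bigl(\norm{D^2u}_{E_{p,q,\beta}}+\norm{\partial_t u}_{E_{p,q,\beta}}\bigr).
\]

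First, by mollification we may assume $u$ is smooth, with the forthcoming bounds uniform in the mollification parameter. Fix $C_\rho=C_\rho(X_0)$ with $X_0=(t_0,x_0)$, and decompose
\[
u(t,x)-\ell(x)=\bigl[u(t,x)-\bar u(x)\bigr]+\bigl[\bar u(x)-\ell(x)\bigr],\qquad \bar u(x):=\fint_{(t_0-\rho^2,\,t_0)}u(\tau,x)\,d\tau.
\]
The first bracket is handled by writing $u(t,x)-\bar u(x)$ as a double integral of $\partial_t u$ in time and applying Fubini, giving a bound of $N\rho^{2}\dashnorm{\partial_t u}_{L_{p,q}(C_\rho)}$ after H\"older's inequality. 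For the second bracket, choose $\ell(x):=(\bar u)_{B_\rho(x_0)}+(D\bar u)_{B_\rho(x_0)}\cdot(x-x_0)$ and apply the standard second-order Poincar\'e inequality on $B_\rho(x_0)$, followed by Fubini to pull $D^2$ inside the temporal average and H\"older to convert the $L_1$-average into an $L_{p,q}$-dashnorm; this produces the bound $N\rho^{2}\dashnorm{D^2 u}_{L_{p,q}(C_\rho)}$. The Morrey inequality $\rho^{\beta}\dashnorm{\cdot}_{L_{p,q}(C_\rho)}\le \norm{\cdot}_{E_{p,q,\beta}}$ then supplies the desired $\rho^{2-\beta}$ scaling.

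The principal obstacle is the endpoint regime $\beta=d/p+2/q$, where no Sobolev embedding yields additional integrability of $D^2u$ or $\partial_t u$: one must remain inside the Morrey scale, which is exactly why we work with the dashnorm-based Poincar\'e estimate rather than a Lebesgue-based one. A secondary point is that since $2-\beta>1$, the Campanato approximation is by an \emph{affine} function (not merely a constant), so the Poincar\'e step must be of second order in $x$ and therefore involves $D^2u$ rather than $Du$; this is precisely why the hypothesis is formulated in the $E^{1,2}_{p,q,\beta}$ scale.
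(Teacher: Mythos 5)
Your proposal is correct but proceeds by a genuinely different route from the paper. The paper proves the two halves of the seminorm separately: for the parabolic H\"older seminorm of $Du$ it applies the basic Campanato theorem (order $1-\beta\in(0,1)$, constant approximations) to each $D_iu$ via a Poincar\'e--Sobolev inequality from \cite[Lemma~3.2]{Krylov2007}; for the temporal H\"older seminorm of $u$ it uses a spatial-mollification/symmetrization trick, decomposing $u(t+r^2,x)-u(t,x)$ into $u-u^{(r)}$ pieces (controlled by the just-established modulus of continuity of $Du$, using that $u^{(r)}-u$ is an integral of symmetric second differences) plus $u^{(r)}(t+r^2,\cdot)-u^{(r)}(t,\cdot)$ (controlled by $\partial_t u$ via the fundamental theorem of calculus). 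You instead invoke a single Campanato characterization of the whole space $\mathscr{C}^{(2-\beta)/2,2-\beta}$ for the mixed order $2-\beta\in(1,2)$ with approximations affine in $x$ and constant in $t$, and verify the Campanato condition directly by the decomposition $u-\ell=(u-\bar u)+(\bar u-\ell)$, where $\bar u$ is the temporal average. Both legs of your estimate are sound: $\fint_{C_\rho}|u-\bar u|\le\rho^2\fint_{C_\rho}|\partial_t u|$ by Fubini, and $\fint_{B_\rho}|\bar u-\ell|\le N\rho^2\fint_{C_\rho}|D^2u|$ by second-order Poincar\'e followed by Fubini, each finished by H\"older and the Morrey bound. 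This gives a cleaner, unified argument and avoids the mollification bootstrap, at the cost of leaning on the $\gamma\in(1,2)$ parabolic Campanato theorem as a black box; that theorem is standard but deeper than the $\gamma\in(0,1)$ version the paper uses, and you should supply a reference (e.g., Lieberman's monograph on second-order parabolic equations, or Krylov's H\"older-space lectures), together with a one-line remark that the iteration needed to pass from the Campanato condition to the time-H\"older estimate is carried out over a chain of overlapping (slightly time-shifted) backward cylinders, since $C_\rho(t,x)$ and $C_\rho(s,x)$ with $|t-s|=\rho^2$ meet only on a null set.
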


\begin{proof}
Using \cite[Lemma 3.2]{Krylov2007} and H\"older's inequality, for any $u \in C^\infty_c(\bR^{d+1})$, we have
\begin{equation*}				
\int_{\tilde C_r} \abs{D_iu-(D_iu)_{\tilde C_r}} \le N r^{1+d+2-d/p-2/q} \left(\norm{D^2u}_{L_{1}(\tilde C_r)}+\norm{\partial_t u}_{L_{1}(\tilde C_r)}\right).
\end{equation*}
Then by \eqref{eq_morrey}, for $r \le 1$, we have
\[
\int_{\tilde C_r} \abs{D_iu-(D_iu)_{\tilde C_r}} \le Nr^{d+2+1-\beta} \left(\norm{D^2u}_{E_{p,q,\beta}}+\norm{\partial_t u}_{E_{p,q,\beta}}\right).
\]
Then, applying Campanato's theorem, for any $C \in \mathbb{C}_1$, we obtain
\begin{equation}				\label{eq0613thu}
[Du]_{\mathscr{C}^{(1-\beta)/2,1-\beta}(C)}\le N \left(\norm{D^2u}_{E_{p,q,\beta}}+\norm{\partial_t u}_{E_{p,q,\beta}}\right).
\end{equation}

Next, we aim to demonstrate that for any $C \in \mathbb{C}_1$, we have
\begin{equation}				\label{eq0750thu}
\sup_{\substack{(t,x), (s,x)\in C\\t \neq s}} \frac{\abs{u(t,x)-u(s,x)}}{\abs{t-s}^{(2-\beta)/2}} \le N \left(\norm{D^2u}_{E_{p,q,\beta}}+\norm{\partial_t u}_{E_{p,q,\beta}}\right).
\end{equation}
We consider a non-negative smooth function $\eta$ with compact support in $B_1(0) \subset \mathbb{R}^d$, satisfying $\int_{\mathbb{R}^d} \eta = 1$. Additionally, we assume that $\eta$ is a radial function.
The spatial mollification $u^{(\epsilon)}$ of $u$ is defined as follows:
\[
u^{(\epsilon)}(t,x)=\int_{\bR^d} u(t, x-\epsilon y) \eta(y)\,dy=\frac{1}{\epsilon^d}\int_{\bR^d} u(t,y)\eta\left(\frac{x-y}{\epsilon}\right)dy.
\]
For $(t,x) \in \bR^{d+1}$, we shall estimate $\abs{u(t+r^2,x)-u(t,x)}$ as follows:
\begin{multline}			\label{eq0728thu}
\abs{u(t+r^2,x)-u(t,x)} 
\le  \abs{u(t+r^2,x)-u^{(r)}(t+r^2,x)}+\abs{u(t,x)-u^{(r)}(t,x)}\\
 +\abs{u^{(r)}(t+r^2,x)-u^{(r)}(t,x)}.
\end{multline}
By using the assumptions that $\eta(y)=\eta(-y)$ and $\int_{B_1}\eta =1$, for any $s \in \bR$, we obtain
\[
u^{(r)}(s,x)-u(s,x)  = \frac12\int_{B_1}  \bigl\{ u(s,x+ry)+u(s,x-ry) -2u(s,x)\bigr\} \,\eta(y)\,dy.
\]
Since
\begin{align*}
u(s,x+ry)+u(s,x-ry) - 2u(s,x) &= \int_0^r \frac{d}{d\tau} u(s, x+\tau y)\,d\tau+ \int_0^r \frac{d}{d\tau} u(s, x-\tau y)\,d\tau \\
&= \int_0^r \left(D_iu(s,x+\tau y)-D_iu(s,x-\tau y) \right)y_i\,d\tau,
\end{align*}
we have
\begin{align}		
				\nonumber
\Abs{u^{(r)}(s,x)-u(s,x)}&=\frac12 \Abs{\int_{B_1}\int_0^r  \left\{D_iu(s,x+\tau y)-D_iu(s,x-\tau y) \right\}y_i \eta(y)\,d\tau  dy}\\
				\nonumber
&\le \frac12 \norm{\eta}_{\infty}\int_{B_1}\int_0^r  \Abs{Du(s,x+\tau y)-Du(s,x-\tau y)} d\tau  dy\\
				\label{eq1526bt}
&\le N r^{2-\beta} \left(\norm{D^2u}_{E_{p,q,\beta}}+\norm{\partial_t u}_{E_{p,q,\beta}}\right),
\end{align}
where we have used \eqref{eq0613thu} in the last inequality.

On the other hand, we have
\begin{align*}
u^{(r)}(t+r^2,x)-u^{(r)}(t,x) &= \int^{t+r^2}_{t} \partial_s u^{(r)}(s,x)\,ds\\		
&= \int^{t+r^2}_{t} \int_{B_1} \partial_s u(s,x-ry)\eta(y)\,dy ds.
\end{align*}
Therefore, 
\begin{align}
			\nonumber
\Abs{u^{(r)}(t+r^2,x)-u^{(r)}(t,x)} &\le  \norm{\eta}_{\infty} r^{-d} \int_{\tilde C_r(t,x)} \abs{\partial_s u(s,y)}dyds\\
			\label{eq1607bt}
&\le N r^{2-d/p-2/q}\norm{\partial_t u}_{L_{p,q}(\tilde C_r)} \le N r^{2-\beta}\norm{\partial_t u}_{E_{p,q,\beta}}.
\end{align}
Combining \eqref{eq0728thu}, \eqref{eq1526bt}, and \eqref{eq1607bt}, we have
\[
\abs{u(x,t+r^2)-u(x,t)} \le Nr^{2-\beta}\left(\norm{D^2u}_{E_{p,q,\beta})}+\norm{\partial_t u}_{E_{p,q,\beta}}\right).
\]
We have proved \eqref{eq0750thu}.
\end{proof}

\begin{remark}			\label{rmk1909sun}
The proof of Lemma~\ref{lem16.48sat} shows the following well known result: if $u \in W^{1,2}_{p,q}$ with $d/p + 2/q < 1$, then 
\[
[u]_{\mathscr{C}^{(1+\alpha)/2,1+\alpha}} \le N \left(\norm{D^2u}_{L_{p,q}}+\norm{\partial_t u}_{L_{p,q}}\right).
\]
However, there is no restrictions on $d/p+2/q$ in the previous lemma.
\end{remark}
The following result can be inferred from \cite{DEK21}, although it was not explicitly stated there.
For the reader's convenience, we provide the statement here.

\begin{theorem}			\label{thm:harnack}
Assume Condition \ref{cond1} holds.
Let $R>0$ be a fixed number, $0<r<R/2$, and $(t_0, x_0) \in \bR^{d+1}$.
Suppose $u \in L_1(C_{2r})$ is a nonnegative weak solution of
\[
-\partial_tu +\dv^2(\mathbf{A} u) = 0 \;\text{ in }\;C_{2r}.
\]
Then, we have
\[
\sup_{(t_0-3r^2, t_0-2r^2)\times B_{r}(x_0)} u \le N \inf_{(t_0-r^2, t_0)\times B_{r}(x_0)} u,
\]
where $N$ is a constant depending only on $d$, $\delta$, $\omega_{\mathbf A}^{\textsf x}$, and $R$.
\end{theorem}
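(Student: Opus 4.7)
The plan is to derive Theorem~\ref{thm:harnack} by extracting and assembling results already present (at times implicitly) in \cite{DEK21}, combined with the classical Krylov--Safonov theory applied to the non-divergence form adjoint $\mathscr{L} v = \partial_t v + a^{ij} D_{ij} v$; no new a priori estimate needs to be proved from scratch.

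First, I would establish local boundedness and continuity of $u$ on every compactly contained sub-cylinder of $C_{2r}$. Local boundedness follows from Lemma~\ref{lem1916sun} with $f \equiv 0$, and continuity is the main result of \cite{DEK21} under Condition~\ref{cond1}. Hence the supremum and infimum appearing in the statement are genuine pointwise quantities attained by $u$.

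Second, I would invoke duality with $\mathscr{L}$. This is a standard forward parabolic operator in non-divergence form with bounded measurable coefficients satisfying uniform ellipticity, so the Krylov--Safonov theory applies and the associated parabolic Green's function $\Gamma(\,\cdot\,;Y)$ on a slightly enlarged cylinder (with zero lateral Dirichlet data and zero initial data) is positive and enjoys two-sided Gaussian bounds with constants depending only on $d$, $\delta$, and the geometric scale. Using the weak formulation \eqref{eq2107mon} with $\vec b \equiv 0$ and $c \equiv 0$, together with the continuity of $u$ from the previous step, one obtains for each interior point $Y \in C_{2r}$ a pointwise representation of $u(Y)$ as a boundary integral of $u$ against $\Gamma(\,\cdot\,;Y)$.

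Applying this representation at $Y_1$ in the earlier slice $(t_0 - 3r^2, t_0 - 2r^2) \times B_r(x_0)$ and at $Y_2$ in the later slice $(t_0 - r^2, t_0) \times B_r(x_0)$, the two-sided Gaussian bounds on $\Gamma$ allow both $u(Y_1)$ and $u(Y_2)$ to be compared to a common $L_1$ average of $u$ over a suitable intermediate time slice, with matching constants depending only on $d$, $\delta$, $R$, and $\omega_{\mathbf A}^{\textsf x}$; taking $\sup$ over $Y_1$ and $\inf$ over $Y_2$ then yields the desired inequality. I expect the main difficulty to be not a new idea but careful bookkeeping: the Green's function construction and the Gaussian estimates in the $\mathrm{DMO}_x$ setting constitute the technical heart of \cite{DEK21}, and the task here is to package these ingredients so that the dependence of constants is exactly as stated.
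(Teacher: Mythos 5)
Your first step matches the paper's exactly: Lemma~\ref{lem1916sun} (with $f\equiv 0$) gives $u\in L_{p,\mathrm{loc}}$ for all $1<p<\infty$, and then \cite[Theorem 1.4]{DEK21} gives continuity, so the $\sup$ and $\inf$ are genuine pointwise quantities. The paper then finishes simply by citing and consolidating Lemmas 5.2 and 5.7 of \cite{DEK21}, which are the local pointwise estimates for nonnegative adjoint solutions that you propose to re-derive from a Green's function representation; so in spirit the route is the same, with the paper treating that final step as a black box.

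There is, however, a genuine error in your second paragraph. You assert that because $\mathscr{L}$ is a non-divergence form parabolic operator with bounded measurable uniformly elliptic coefficients, Krylov--Safonov theory already yields \emph{two-sided Gaussian bounds} on $\Gamma(\cdot;Y)$ ``with constants depending only on $d$, $\delta$, and the geometric scale.'' This is false, and it is precisely the point of the $\mathrm{DMO}_x$ hypothesis. As a function of the pole $Y$, $\Gamma(X;\cdot)$ solves the double-divergence (adjoint) equation $\mathscr{L}^*\Gamma(X;\cdot)=\delta_X$; the paper's own discussion, citing Bauman's counterexample \cite{Bauman84b}, emphasizes that nonnegative adjoint solutions need not even be locally bounded when $\mathbf A$ is merely measurable. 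If a two-sided Gaussian bound with constants depending only on $d$ and $\delta$ were available, the representation formula would immediately make every nonnegative adjoint solution locally bounded, contradicting that counterexample. Krylov--Safonov does give the lower Gaussian bound and the interior Harnack inequality for $\mathscr{L}$-caloric functions, but the \emph{upper} Gaussian bound on the Green's function --- equivalently, the quantitative boundedness of nonnegative adjoint solutions --- is a deep consequence of the $\mathrm{DMO}_x$ condition and is exactly the technical heart of \cite{DEK21}. Your closing sentence implicitly concedes this, but it contradicts the earlier claim; the dependence of the Harnack constant on $\omega_{\mathbf A}^{\mathsf x}$ enters through this step and cannot be eliminated.
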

\begin{proof}
By Lemma~\ref{lem1916sun}, we have $u \in L_{p,{\rm loc}}(C_{2r})$, for any $1<p<\infty$, and thus by \cite[Theorem 1.4]{DEK21}, it is continuous in $C_{2r}$.
The conclusion of the lemma is derived by consolidating Lemmas 5.2 and 5.7 in \cite{DEK21}.
\end{proof}

\section{Proof of the main theorem}				\label{sec3}

\subsection{Regularization of singular zeroth order coefficient}	\label{sec3.1}
We shall first demonstrate how to transform a singular $c$ to a regular $\tilde c$.
Let $u$ be a solution to
\[
\mathscr{L}^* u -cu=0 \quad \text{in }Q\subset \bR^{d+1},
\]
Then we have the identity
\begin{equation}			\label{eq0918tue}
I=\iint_{\bR^{d+1}} u (\mathscr{L}-c)\eta\, dxdt=0,\quad \forall \eta \in C^\infty_c(Q).
\end{equation}
Let $\zeta$ be a function to be determined.
By taking $\zeta \eta$ in place of $\eta$ in \eqref{eq0918tue}, we have
\begin{equation}			\label{eq1034tue}
I=\iint_{\bR^{d+1}} u(\mathscr{L}-c)\zeta \eta\,dxdt=0,\quad \forall \eta \in C^\infty_c(Q).
\end{equation}
We note that
\begin{align}
				\nonumber
(\mathscr{L}-c)\zeta \eta &= \zeta \mathscr{L}\eta + \eta \mathscr{L}\zeta + 2a^{ij}D_i \eta D_j \zeta - c \zeta \eta\\
&=\zeta \left(\mathscr{L} + 2a^{ij}\frac{D_i \zeta}{\zeta} D_j \right)\eta + \eta (\mathscr{L}-c)\zeta.		\label{eq0944tue}
\end{align}
We shall consider $\zeta$ that is of the form $\zeta=1+\zeta_\lambda$, where $\zeta_\lambda$ is a solution to the problem
\begin{equation}			\label{eq2029sat}
(\mathscr{L}-c -\lambda ) \zeta_\lambda = c \quad\text{in}\quad \bR^{d+1}.
\end{equation}
We note that for $r \le 1$, we have
\[
r \sup_{C \in \mathbb{C}_r} \dashnorm{\vec b}_{L_{(\beta+1)\tilde p, (\beta+1)\tilde q}(C)}  \le r \sup_{C \in \mathbb{C}_r}\dashnorm{\vec b}_{L_{p,q}(C)} \le r^{1-\beta} \sup_{C \in \mathbb{C}_r} r^\beta \dashnorm{\vec b}_{L_{p,q}(C)} \le r^{1-\beta} \mathfrak{b}.
\]
Therefore, for any $\check{b} \in (0,1]$, there exists a $\rho \in (0,1]$ such that we have 
\[
\sup_{r \le \rho} r\, \sup_{C \in \mathbb{C}_r} \dashnorm{\vec b}_{L_{(\beta+1)\tilde p, (\beta+1)\tilde q}(C)} \le \rho^{1-\beta} \mathfrak{b} \le \check{b}.
\]

By \cite[Theorem 3.5]{Krylov2023}, there exists a constant $\lambda_0\ge 1$ depending only on $d$, $\delta$, $\omega_{\mathbf A}^{\mathsf x}$, $p$, $q$, $\beta$, $\mathfrak{b}$, and $\mathfrak{c}$ such that for $\lambda \ge \lambda_0$, the equation \eqref{eq2029sat} has a unique solution $\zeta_\lambda$ in the function space $E^{1,2}_{\tilde p, \tilde q,\beta+1}$ satisfying
\begin{equation}			\label{eq1815sat}
\norm{\partial_t \zeta_\lambda,\,D^2 \zeta_\lambda \, \sqrt{\lambda}D\zeta_\lambda,\,\lambda \zeta_\lambda}_{E_{\tilde p,\tilde q,\beta+1}} \le N,
\end{equation}
where $N$ is a constant depending only on $d$, $\delta$, $\omega_{\mathbf A}^{\mathsf x}$, $p$, $q$, $\beta$, $\mathfrak{b}$, and $\mathfrak{c}$.

\begin{lemma}			\label{lem2019thu}
There exists a constant $N$ that depends only on $d$, $\delta$, $\omega_{\mathbf A}^{\mathsf x}$, $p$, $q$, $\beta$, $\mathfrak{b}$, and $\mathfrak{c}$, such that the following estimates hold:
\begin{equation}			\label{eq2141sat}
\norm{\zeta_\lambda}_{L_\infty(\bR^{d+1})} \le N \lambda^{(\beta-1)/2},\quad[\zeta_\lambda]_{\mathscr{C}^{(1-\beta)/2, 1-\beta}(C)} \le N,\;\; \forall C \in \mathbb{C}_1,\quad \norm{D\zeta_\lambda}_{E_{p,q,\beta}} \le N.
\end{equation}
\end{lemma}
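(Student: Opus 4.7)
The proof establishes the three estimates in the order (ii), (i), (iii), using \eqref{eq1815sat} as the only input.

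For (ii) the key observation is that $2-(\beta+1)=1-\beta$, so heuristically the Morrey bounds on $D^2\zeta_\lambda$ and $\partial_t\zeta_\lambda$ in $E_{\tilde p,\tilde q,\beta+1}$ should yield $\mathscr{C}^{(1-\beta)/2,1-\beta}$ regularity of $\zeta_\lambda$ itself, one derivative lower than what Lemma~\ref{lem16.48sat} gives. The plan is to mimic the proof of Lemma~\ref{lem16.48sat} at the function level: combine a parabolic Poincar\'e-type estimate in the spirit of Krylov's Lemma~3.2 (applied to $\zeta_\lambda$ rather than to $D_i\zeta_\lambda$, consuming two spatial derivatives at once) with H\"older's inequality and the Morrey bounds to obtain
\[
\fint_{\tilde C_r}\abs{\zeta_\lambda-(\zeta_\lambda)_{\tilde C_r}}\,dX \le Nr^{1-\beta},\quad r\le 1.
\]
Campanato's theorem then supplies the spatial portion of the seminorm. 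The temporal portion follows by the spatial mollification argument used at the end of the proof of Lemma~\ref{lem16.48sat}: the mollification error $\abs{\zeta_\lambda^{(r)}-\zeta_\lambda}$ is controlled by the spatial H\"older bound just proved, and $\abs{\zeta_\lambda^{(r)}(t+r^2,x)-\zeta_\lambda^{(r)}(t,x)}$ by the Morrey norm of $\partial_t\zeta_\lambda$.

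Once (ii) is available, (i) follows by a simple optimization. For any $X_0\in\bR^{d+1}$, take $\rho=\lambda^{-1/2}$, which lies in $(0,1]$ since $\lambda\ge\lambda_0\ge 1$, and let $C=\tilde C_\rho(X_0)\in\mathbb{C}_\rho$. By (ii),
\[
\abs{\zeta_\lambda(X_0)}\le\fint_C\abs{\zeta_\lambda}\,dX+N\rho^{1-\beta}.
\]
The average is dominated by $\dashnorm{\zeta_\lambda}_{L_{\tilde p,\tilde q}(C)}$, which in turn is bounded by $N\lambda^{-1}\rho^{-\beta-1}=N\lambda^{(\beta-1)/2}$ using $\norm{\lambda\zeta_\lambda}_{E_{\tilde p,\tilde q,\beta+1}}\le N$. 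Since also $\rho^{1-\beta}=\lambda^{(\beta-1)/2}$, both contributions give the stated exponent.

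Finally, (iii) requires upgrading $D\zeta_\lambda$ from $E_{\tilde p,\tilde q,\beta+1}$, where \eqref{eq1815sat} provides a bound of size $O(\lambda^{-1/2})$, to $E_{p,q,\beta}$, simultaneously raising the integrability and lowering the Morrey exponent by one. The plan is a Gagliardo--Nirenberg-type interpolation in Morrey scales: for each $C\in\mathbb{C}_\rho$ with $\rho\le 1$, bound $\dashnorm{D\zeta_\lambda}_{L_{p,q}(C)}$ by combining the $L_\infty$ bound on $\zeta_\lambda$ from (i) with the $E_{\tilde p,\tilde q,\beta+1}$ bound on $D^2\zeta_\lambda$, tracking the powers of $\rho$ carefully. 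This step is the main obstacle: the two Morrey scales $(\tilde p,\tilde q,\beta+1)$ and $(p,q,\beta)$ must be bridged through an interpolation inequality whose dimensional accounting hinges on the identities $\tilde p=\beta p/(\beta+1)$ and $\tilde q=\beta q/(\beta+1)$ together with the hypothesis $d/p+2/q\ge\beta$ of Condition~\ref{cond2}, which is precisely what makes the resulting powers of $\rho$ (and $\lambda$) balance to give a bound uniform in $\lambda$.
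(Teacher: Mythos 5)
The paper's proof of Lemma~\ref{lem2019thu} is a three-line citation: the $L_\infty$ bound comes from the interpolation inequality in \cite[Lemma 2.5]{Krylov2023} applied with $\epsilon=(\lambda_0/\lambda)^{1/2}$, and the second and third estimates are quoted from \cite[Lemmas 2.6 and 2.8]{Krylov2023}. You instead try to rebuild these from first principles, and your derivation of the sup bound from the H\"older seminorm by taking $\rho=\lambda^{-1/2}$ and balancing $\rho^{1-\beta}$ against the average over $\tilde C_\rho$ is a correct and genuinely different route; if the H\"older seminorm estimate were available, that argument would work, and it is arithmetically consistent with the exponent the paper obtains by optimizing $\epsilon$.

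However, the way you propose to prove the H\"older seminorm estimate has a genuine gap. You propose a Poincar\'e inequality that bounds $\fint_{\tilde C_r}\abs{\zeta_\lambda-(\zeta_\lambda)_{\tilde C_r}}$ by $r^2$ times averages of $D^2\zeta_\lambda$ and $\partial_t\zeta_\lambda$, ``consuming two spatial derivatives at once.'' No such inequality exists: taking $\zeta=x^1$ gives $D^2\zeta=\partial_t\zeta=0$ while the mean oscillation is of order $r$. Controlling first-order mean oscillation by second derivatives requires subtracting an affine approximant (which is what \cite[Lemma 3.2]{Krylov2007}, quoted in Lemma~\ref{lem16.48sat}, does, yielding $\mathscr C^{1,\alpha}$ rather than $\mathscr C^\alpha$ information). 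A sound Campanato argument for $[\zeta_\lambda]_{\mathscr C^{(1-\beta)/2,1-\beta}}$ has to use a first-order Poincar\'e $\fint\abs{\zeta-\bar\zeta}\lesssim r\fint\abs{D\zeta}+r^2\fint\abs{\partial_t\zeta}$ together with the Morrey bound $\norm{D\zeta_\lambda}_{E_{p,q,\beta}}\le N$; the alternative input $\norm{D\zeta_\lambda}_{E_{\tilde p,\tilde q,\beta+1}}\lesssim\lambda^{-1/2}$ from \eqref{eq1815sat} only gives $r\fint\abs{D\zeta_\lambda}\lesssim\lambda^{-1/2}r^{-\beta}$, which fails at small $r$. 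Thus (ii) depends on (iii), and your proposed order (ii)$\to$(i)$\to$(iii) is circular. Finally, (iii) is the real content, and you leave it as ``a Gagliardo--Nirenberg-type interpolation'' with the dimensional accounting to be done; that is precisely the role of \cite[Lemma 2.6]{Krylov2023} in the paper's argument, and it is not reconstructed here. So the $L_\infty$ step is a valid alternative in principle, but the two estimates it is meant to rest on are not established.
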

\begin{proof}
According to \cite[Lemma 2.5]{Krylov2023}, for any $\epsilon \in (0,1]$, we have
\[
\sup_{\bR^{d+1}}\, \abs{\zeta_\lambda} \le \epsilon^{1-\beta}\left(\norm{\partial_t \zeta_\lambda}_{E_{\tilde p,\tilde q,\beta+1}}+\norm{D^2 \zeta_\lambda}_{E_{\tilde p,\tilde q,\beta+1}} \right)+N \epsilon^{-\beta-1} \norm{\zeta_\lambda}_{E_{\tilde p,\tilde q,\beta+1}}.
\]
Then, from \eqref{eq1815sat}, for $\epsilon \in (0,1]$ and $\lambda \ge \lambda_0$, we have
\[
\abs{\zeta_\lambda} \le N \left(\epsilon^{1-\beta} + \epsilon^{-\beta-1}\lambda^{-1}\right).
\]
By taking $\epsilon=(\lambda_0/\lambda)^{1/2}$ in the above, we get the first part of \eqref{eq2141sat}.
The second and third part of \eqref{eq2141sat} follow from \eqref{eq1815sat} and Lemmas 2.6 and 2.8 in  \cite{Krylov2023}.
\end{proof}

By Lemma~\ref{lem2019thu}, we can choose $\lambda$ large enough such that $\abs{\zeta_\lambda} \le \frac12$.
With this choice of $\lambda$, we have $\zeta=1+\zeta_\lambda$ satisfies
\begin{equation}			\label{eq1934sat}
1/2 \le \zeta \le 2.
\end{equation}
Moreover, for $X=(t,x)$ and $Y=(s,y)$ with $\abs{X-Y}_p <1$, we have
\[
\frac{\abs{\zeta(X)-\zeta(Y)}}{\;\;\abs{X-Y}_p^{1-\beta}} \le \sup_{C \in \mathbb C_1} \,[\zeta_\lambda]_{\mathscr{C}^{(1-\beta)/2,1-\beta}(C)} \le N.
\]
Note that the left-hand side of the previous inequality is bounded by $2 \sup_{\mathbb R^{d+1}} \abs{\zeta}$ whenever $\abs{X-Y}_p \ge 1$.
Hence, we deduce that
\begin{equation}			\label{eq0825sun}
[\zeta]_{\mathscr{C}^{(1-\beta)/2,1-\beta}(\mathbb R^{d+1})} \le N.
\end{equation}

Since we have
\[
(\mathscr{L} -c)\zeta= (\mathscr{L} -c)(1+\zeta_\lambda)= -c + (\mathscr{L} -c-\lambda) \zeta_\lambda+ \lambda \zeta_\lambda=\lambda \zeta_\lambda=\lambda(\zeta-1),
\]
from \eqref{eq0944tue}, we obtain
\begin{equation}			\label{eq1042tue}
(\mathscr{L}-c)\zeta \eta= \zeta \left(\mathscr{L} + 2a^{ij}\frac{D_i \zeta}{\zeta} D_j +\lambda \frac{\zeta-1}{\zeta} \right)\eta.
\end{equation}
We define
\begin{equation}			\label{eq1948sat}
\bar b^j= b^j + 2a^{ij}\frac{D_i \zeta}{\zeta},\quad \bar c=- \lambda\left(\frac{\zeta-1}{\zeta}\right).
\end{equation}
Then from \eqref{eq1034tue} and \eqref{eq1042tue}, we find that $\bar u= \zeta u$ satisfies
\begin{equation}			\label{eq2033mon}
I=\iint_{\bR^{d+1}} \bar u \left(a^{ij}D_{ij} + \bar b^i D_i -\bar c \right) \eta =0,
\end{equation}
and thus $\bar u$ is a solution to
\[
-\partial_t \bar u + D_{ij}(a^{ij}u)-D_i(\bar b^i u) -\bar c u=0\;\text{ in }\;Q.
\]

\begin{remark}			\label{rmk_c}
We observe from \eqref{eq1934sat} and \eqref{eq1948sat} that the functions $\bar b^j - b^j$ are bounded, implying that $\bar b^j$ belongs to the same $E_{p,q, \beta}$ space as $b^j$.
Additionally, it follows from \eqref{eq1934sat}, \eqref{eq0825sun}, and \eqref{eq1948sat} that $\bar c$ is a bounded H\"older continuous function.
More quantitatively, we have
\begin{gather*}
\norm{\bar{\vec b}}_{E_{p,q,\beta}} \le \norm{\vec b}_{E_{p,q,\beta}}+4\delta^{-1}\norm{D\zeta}_{E_{p,q,\beta}} \le N,\\
\norm{\bar c}_{L_\infty(\mathbb{R}^{d+1})} \le \lambda\norm{(\zeta-1)/\zeta}_{L_\infty} \le N \lambda \le N,\\
[\bar c]_{\mathscr{C}^{(1-\beta)/2,1-\beta}(\mathbb{R}^{d+1})} \le  \lambda [\zeta]_{\mathscr{C}^{(1-\beta)/2,1-\beta}(\mathbb{R}^{d+1})}/\norm{\zeta}_{L_\infty(\mathbb{R}^{d+1})}^2 \le N \lambda \le N,
\end{gather*}
where $N$ depends only on $d$, $\delta$, $\omega_{\mathbf A}^{\mathsf x}$, $p$, $q$, $\beta$, $\mathfrak{b}$, and $\mathfrak{c}$.
\end{remark}

\subsection{Regularization of singular drift term}				\label{sec3.2}
We proceed to regularize the drift term through Zvonkin's transform, which was first introduced in \cite{Zvonkin}. 
Below we use a modification of this transformation introduced in \cite{FGP2010}.

Let $\vec \Phi: \bR^{d+1} \to \bR^{d+1}$ be an invertible mapping defined as
\[
\vec\Phi(t,x)=(t, \vec \phi(t,x)),
\]
where $\vec \phi=(\phi^1,\ldots, \phi^d)$.
The inverse of $\vec \Phi$, denoted by $\vec \Psi$, is given by
\[
\vec \Psi(t,y)=(t, \vec \psi(t,y)).
\]
It is important to note that for each fixed $t$, we have $\vec\psi(t,\cdot)$ as the inverse of $\vec \phi(t,\cdot)$, meaning
\begin{equation*}			
\vec \psi(t,\vec\phi(t,x))=x,\quad \vec\phi(t,\vec \psi(t,y))=y.
\end{equation*}
By utilizing $\eta \circ \vec \Phi$ as a test function in place of $\eta$ in equation \eqref{eq2107mon}, we have
\begin{equation}			\label{eq2207mon}
I=\iint_{\bR^{d+1}} u (\mathscr{L} -c) \eta \circ \vec\Phi\, dxdt=0.
\end{equation}
Clearly, we can observe that
\begin{align*}
\partial_t (\eta \circ \vec \Phi)(t,x) &= \partial_t \eta(t, \vec \phi(t,x)) +D_k \eta(t, \vec\phi(t,x))  \phi^k_t(t,x),\\
D_i (\eta \circ \vec \Phi)(t,x) &= D_k\eta(t, \vec \phi(t,x)) D_i\phi^k(t,x),\\
D_{ij} (\eta \circ \vec \Phi)(t,x) &= D_{kl}\eta(t, \vec \phi(t,x)) D_i\phi^k(t,x) D_j\phi^l(t,x) +D_k \eta(t, \vec\phi(t,x))  D_{ij}\phi^k(t,x),
\end{align*}
Therefore, we can express
\begin{multline}				\label{eq2135mon}
(\mathscr{L}-c) \eta(\vec\Phi (t,x))= \partial_t \eta(t, \vec\phi(t,x)) + \tilde a^{kl}(t,x) D_{kl}\eta(t,\vec\phi(t,x))-c(t,x) \eta(t,\vec\phi(t,x))\\
+ D_k \eta(t,\vec \phi(t,x)) \mathscr{L} \phi^k(t,x),
\end{multline}
with the notation
\begin{equation}			\label{eq2137mon}
\tilde a^{kl}(t,x)= a^{ij}(t,x) D_i\phi^k(t,x) D_j \phi^l(t,x)
\end{equation}

We will consider $\vec \phi(t,x)$ in the form $\vec \phi(t,x)=\vec \phi_\lambda(t,x)=x+\vec \xi_\lambda(t,x)$, where $\vec \xi_\lambda$ satisfies the system of equations
\begin{equation}				\label{eq1652fri}
(\mathscr{L}-\lambda) \vec \xi_\lambda=-\vec b\;\text{ in }\;\mathbb{R}^{d+1}.
\end{equation}

Once again, according to \cite[Theorem 3.5]{Krylov2023}, there exist a constant $\lambda_0>0$ depending only on $d$, $\delta$, $\omega_{\mathbf A}^{\mathsf x}$, $p$, $q$, $\beta$, $\mathfrak{b}$, and $\mathfrak{c}$, such that for $\lambda \ge \lambda_0$, the system \eqref{eq1652fri} has a unique solution $\vec \xi_\lambda=(\xi_\lambda^1,\ldots, \xi_\lambda^d)$.

\begin{lemma}			\label{lem2036sat}
There exists a constant $N$ that depends only on $d$, $\delta$, $\omega_{\mathbf A}^{\mathsf x}$, $p$, $q$, $\beta$, $\mathfrak{b}$, and $\mathfrak{c}$, such that the following estimates hold:
\begin{equation}				\label{eq2037sat}
\sup_{\bR^{d+1}}\,\abs{\vec \xi_\lambda} \le N \lambda^{(\beta-2)/2},\quad
\sup_{\bR^{d+1}}\,\abs{D \vec \xi_\lambda} \le N \lambda^{(\beta-1)/2}.
\end{equation}
\end{lemma}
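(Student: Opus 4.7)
The plan is to follow the template of Lemma~\ref{lem2019thu}, adapted for the fact that $\vec\xi_\lambda$ solves the system \eqref{eq1652fri} with right-hand side $-\vec b \in E_{p,q,\beta}$, so the natural Morrey scale is $E_{p,q,\beta}$ rather than $E_{\tilde p,\tilde q,\beta+1}$. Applying \cite[Theorem 3.5]{Krylov2023} componentwise to \eqref{eq1652fri} produces, for all $\lambda\ge\lambda_0$,
\[
\norm{\partial_t \vec\xi_\lambda,\, D^2\vec\xi_\lambda,\,\sqrt{\lambda}\,D\vec\xi_\lambda,\,\lambda\vec\xi_\lambda}_{E_{p,q,\beta}} \le N,
\]
which serves as the starting point for both estimates in \eqref{eq2037sat}.

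For the first estimate, I would invoke \cite[Lemma 2.5]{Krylov2023} with Morrey exponent $\beta$: for any $\epsilon\in(0,1]$,
\[
\sup_{\mathbb{R}^{d+1}}\,\abs{\vec\xi_\lambda} \le \epsilon^{2-\beta}\bigl(\norm{\partial_t \vec\xi_\lambda}_{E_{p,q,\beta}}+\norm{D^2 \vec\xi_\lambda}_{E_{p,q,\beta}}\bigr) + N\epsilon^{-\beta}\norm{\vec\xi_\lambda}_{E_{p,q,\beta}} \le N\bigl(\epsilon^{2-\beta}+\epsilon^{-\beta}\lambda^{-1}\bigr).
\]
Choosing $\epsilon=(\lambda_0/\lambda)^{1/2}$ balances both terms at $\lambda^{(\beta-2)/2}$, yielding the first bound in \eqref{eq2037sat}.

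For the gradient bound, the cleanest route is a Landau--Kolmogorov interpolation between the $L_\infty$ estimate just proved and a H\"older seminorm on $D\vec\xi_\lambda$. Lemma~\ref{lem16.48sat} applied to each component of $\vec\xi_\lambda$ gives
\[
[D\vec\xi_\lambda]_{\mathscr{C}^{(1-\beta)/2,1-\beta}(C)} \le [\vec\xi_\lambda]_{\mathscr{C}^{(2-\beta)/2,2-\beta}(C)} \le N,\quad \forall\, C\in\mathbb{C}_1.
\]
Writing $h D_j\vec\xi_\lambda(X)=\vec\xi_\lambda(X+he_j)-\vec\xi_\lambda(X)-\int_0^h\bigl(D_j\vec\xi_\lambda(X+se_j)-D_j\vec\xi_\lambda(X)\bigr)\,ds$ and using the two bounds, one obtains $\abs{D\vec\xi_\lambda(X)}\le N\bigl(h^{-1}\lambda^{(\beta-2)/2}+h^{1-\beta}\bigr)$ for every $0<h\le 1$; the choice $h=\lambda^{-1/2}$ again balances both terms at $\lambda^{(\beta-1)/2}$.

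The main obstacle is bookkeeping rather than anything substantive: Lemma~\ref{lem16.48sat} provides H\"older control only on cylinders in $\mathbb{C}_1$, so one must verify that $h=\lambda^{-1/2}\le 1$ (automatic for $\lambda\ge\lambda_0\ge 1$) and handle the trivial long-distance regime by the crude $L_\infty$ bound, exactly as in the passage leading to \eqref{eq0825sun} for $\zeta_\lambda$. Once that is dispatched, both estimates of \eqref{eq2037sat} follow with $N$ depending only on the data listed in the statement.
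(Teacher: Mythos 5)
Your proof is correct, and it parallels the paper's argument for the first estimate but takes a genuinely different route for the second. For the bound on $\sup\abs{\vec\xi_\lambda}$, you do exactly what the paper does: obtain \eqref{eq1000sun} from \cite[Theorem~3.5]{Krylov2023}, feed it into the interpolation inequality of \cite[Lemma~2.5]{Krylov2023} at the $E_{p,q,\beta}$ scale, and balance with $\epsilon=(\lambda_0/\lambda)^{1/2}$. For the gradient bound, however, the paper simply invokes the same \cite[Lemma~2.5]{Krylov2023} a second time, which also yields the first-order interpolation inequality
\[
\sup_{\bR^{d+1}}\,\abs{D\vec\xi_\lambda}\le N\bigl(\epsilon^{1-\beta}+\epsilon^{-\beta-1}\lambda^{-1}\bigr),
\]
and again balances with $\epsilon=(\lambda_0/\lambda)^{1/2}$. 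You instead derive the gradient bound from the paper's own Lemma~\ref{lem16.48sat}, which under Condition~\ref{cond2} gives $[D\vec\xi_\lambda]_{\mathscr{C}^{(1-\beta)/2,1-\beta}(C)}\le N$ on every $C\in\mathbb{C}_1$, and then run a Landau--Kolmogorov interpolation against the already-established $L_\infty$ bound on $\vec\xi_\lambda$, balancing at $h=\lambda^{-1/2}$. This is somewhat longer but is more self-contained, relying only on the Morrey-to-H\"older embedding already proved in the paper rather than on a second gradient-level case of the external reference; it is also philosophically closer to the interpolation device the paper itself employs a moment later in Lemma~\ref{lip_one_half}. Both arguments yield the same exponent $\lambda^{(\beta-1)/2}$ with constants controlled by the stated data, so the approaches are interchangeable.
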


\begin{proof}
Similar to the proof of Lemma~\ref{lem2019thu}, by \cite[Theorem 3.5]{Krylov2023},  for $\lambda \ge \lambda_0$, we have
\begin{equation}			\label{eq1000sun}
\norm{\partial_t \vec \xi_\lambda,\, D^2 \vec \xi_\lambda,\,\sqrt{\lambda}D \vec \xi_\lambda, \,\lambda \vec \xi_\lambda}_{E_{p,q,\beta}} \le N \norm{\vec b}_{E_{p,q,\beta}} \le N.
\end{equation}
Then, utilizing \cite[Lemma 2.5]{Krylov2023}, for $\varepsilon \in (0,1]$, the estimates
\begin{align*}
\sup_{\bR^{d+1}}\, \abs{\vec \xi_\lambda} \le N \left(\varepsilon^{2-\beta} + \varepsilon^{-\beta}\lambda^{-1}\right),\quad 
\sup_{\bR^{d+1}}\,\abs{D\vec \xi_\lambda} \le N \left(\varepsilon^{1-\beta} + \varepsilon^{-\beta-1}\lambda^{-1}\right)
\end{align*}
holds, where the constant $N$ depends solely on $d$, $\delta$, $\omega_{\mathbf A}^{\mathsf x}$, $p$, $q$, $\beta$, $\mathfrak{b}$, and $\mathfrak{c}$.
By setting $\epsilon=(\lambda_0/\lambda)^{1/2}$, we obtain \eqref{eq2037sat}.
\end{proof}

\begin{lemma}			\label{lip_one_half}
For $s$, $t\in \bR$, such that $\abs{s-t} \le 1$ and $x \in \bR^d$, we have
\[
\abs{\vec \xi_\lambda(t,x)-\vec \xi_\lambda(t,x)} \le N \lambda^{-(1-\beta)/2} \abs{t-s}^{1/2}.
\]
\end{lemma}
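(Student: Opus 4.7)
The plan is to interpolate between the pointwise $L_\infty$ bounds on $\vec\xi_\lambda$ and $D\vec\xi_\lambda$ provided by Lemma~\ref{lem2036sat} and the Morrey estimate on $\partial_t \vec\xi_\lambda$ in \eqref{eq1000sun}, via the spatial mollification trick used in the proof of Lemma~\ref{lem16.48sat}. The natural threshold separating the two regimes will be $\abs{t-s}^{1/2} \sim \lambda^{-1/2}$.

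Without loss of generality assume $t>s$, set $r=\abs{t-s}^{1/2}\le 1$, and distinguish two cases. If $r\ge \lambda^{-1/2}$, one simply uses the $L_\infty$ estimate from \eqref{eq2037sat} to write
\[
\abs{\vec\xi_\lambda(t,x)-\vec\xi_\lambda(s,x)}\le 2\sup_{\bR^{d+1}}\abs{\vec\xi_\lambda}\le N\lambda^{(\beta-2)/2}=N\lambda^{-(1-\beta)/2}\cdot\lambda^{-1/2}\le N\lambda^{-(1-\beta)/2}\,r,
\]
which is the desired bound. The nontrivial case is $r\le \lambda^{-1/2}$.

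For $r\le\lambda^{-1/2}$, I would mimic \eqref{eq0728thu} and decompose
\[
\vec\xi_\lambda(t,x)-\vec\xi_\lambda(s,x)=\bigl[\vec\xi_\lambda(t,x)-\vec\xi_\lambda^{(r)}(t,x)\bigr]+\bigl[\vec\xi_\lambda^{(r)}(t,x)-\vec\xi_\lambda^{(r)}(s,x)\bigr]+\bigl[\vec\xi_\lambda^{(r)}(s,x)-\vec\xi_\lambda(s,x)\bigr],
\]
where $\vec\xi_\lambda^{(r)}$ is the spatial mollification defined in the proof of Lemma~\ref{lem16.48sat}. For the first and third terms, instead of invoking the Campanato estimate \eqref{eq0613thu}, I would bound $\abs{Du(\cdot,x+\tau y)-Du(\cdot,x-\tau y)}\le 2\sup \abs{D\vec\xi_\lambda}\le N\lambda^{(\beta-1)/2}$ using Lemma~\ref{lem2036sat}; integrating as in the derivation of \eqref{eq1526bt} yields
\[
\abs{\vec\xi_\lambda(t,x)-\vec\xi_\lambda^{(r)}(t,x)}+\abs{\vec\xi_\lambda(s,x)-\vec\xi_\lambda^{(r)}(s,x)}\le N\,r\,\lambda^{(\beta-1)/2}=N\lambda^{-(1-\beta)/2}\,r.
\]
For the middle term, I would reproduce the computation leading to \eqref{eq1607bt}, using $\partial_t\vec\xi_\lambda\in E_{p,q,\beta}$ (rather than $E_{\tilde p,\tilde q,\beta+1}$), to obtain
\[
\abs{\vec\xi_\lambda^{(r)}(t,x)-\vec\xi_\lambda^{(r)}(s,x)}\le N r^{2-\beta}\norm{\partial_t\vec\xi_\lambda}_{E_{p,q,\beta}}\le N r^{2-\beta},
\]
where the last inequality uses \eqref{eq1000sun}. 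Since $r\le\lambda^{-1/2}$, we have $r^{1-\beta}\le\lambda^{-(1-\beta)/2}$, so $r^{2-\beta}\le \lambda^{-(1-\beta)/2}\,r$, and the three pieces together give the claimed estimate.

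The only delicate point, and the place where I would concentrate my attention, is the $\lambda$-bookkeeping: one must use the sharp gradient bound $\sup\abs{D\vec\xi_\lambda}\le N\lambda^{(\beta-1)/2}$ in the mollification error (not the generic Campanato estimate used in Lemma~\ref{lem16.48sat}) in order to extract the factor $\lambda^{-(1-\beta)/2}$ from the spatial terms, and one must check that the threshold $r=\lambda^{-1/2}$ is exactly what matches this gain against the Morrey-type temporal bound $r^{2-\beta}$. Everything else is routine.
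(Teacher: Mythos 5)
Your proof is correct, and it is close in spirit to the paper's: both hinge on a case split at the threshold $r=\abs{t-s}^{1/2}\sim\lambda^{-1/2}$ and interpolate between the sup bound on $\vec\xi_\lambda$ and a $(2-\beta)/2$-H\"older-in-time estimate coming from \eqref{eq1000sun}. The difference is one of packaging. The paper first invokes Lemma~\ref{lem16.48sat} as a black box to record the seminorm bound \eqref{eq1210sun}, i.e.\ $[\vec\xi_\lambda]_{\mathscr{C}^{(2-\beta)/2,2-\beta}(C)}\le N$, and then proves a clean H\"older-interpolation inequality \eqref{eq1150sun} of the form $A\le\epsilon B+N\epsilon^{-1/(1-\beta)}\sup\abs{\vec\xi_\lambda}$, plugging in $\epsilon=\lambda^{-(1-\beta)/2}$. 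You instead unroll the mollification argument of Lemma~\ref{lem16.48sat} directly inside the low-$r$ regime. Both are valid, and the case split inside the paper's proof of \eqref{eq1150sun} is literally the same as yours.

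One remark on your closing comment, which contains a small misconception. You say one \emph{must} use the sharp gradient bound $\sup\abs{D\vec\xi_\lambda}\le N\lambda^{(\beta-1)/2}$ for the mollification-error terms rather than the Campanato bound \eqref{eq0613thu}. That is not the case: the Campanato bound gives these terms a contribution $Nr^{2-\beta}=Nr\cdot r^{1-\beta}$, and in your Case~2 you have $r\le\lambda^{-1/2}$, hence $r^{1-\beta}\le\lambda^{-(1-\beta)/2}$, so $Nr^{2-\beta}\le N\lambda^{-(1-\beta)/2}r$ just as well. In other words, in the regime $r\le\lambda^{-1/2}$ the Campanato estimate is at least as good as the gradient sup bound, and your variant is equivalent to (not sharper than) simply quoting Lemma~\ref{lem16.48sat} and doing the interpolation, which is precisely what the paper does. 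This is a presentation choice, not a logical necessity, and does not affect the correctness of your argument.
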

\begin{proof}
By Lemma~\ref{lem16.48sat} and \eqref{eq1000sun}, for any $C \in \mathbb{C}_1$, we have
\begin{equation}			\label{eq1210sun}
\sup_{\substack{(t,x), (s,x)\in C\\t \neq s}} \frac{\abs{\vec \xi_\lambda(t,x)-\vec \xi_\lambda(s,x)}}{\abs{t-s}^{(2-\beta)/2}} \le N \left(\norm{D^2 \vec \xi_\lambda}_{E_{p,q,\beta}}+\norm{\partial_t \vec \xi_\lambda}_{E_{p,q,\beta}}\right) \le N.
\end{equation}
On the other hand, by \eqref{eq2037sat}, we have 
\begin{equation}			\label{eq1211sun}
\sup_{(t,x)\in C} \,\abs{\vec \xi_\lambda(t,x)} \le N \lambda^{(\beta-2)/2}.
\end{equation}
The lemma is established through interpolation inequalities, as demonstrated in, for example, \cite[Theorem 3.2.1]{Krylov1996}.
To ensure completeness, we present the argument below.
Denote 
\[
A:=\sup_{\substack{(t,x), (s,x)\in C\\t \neq s}} \frac{\abs{\vec \xi_\lambda(t,x)-\vec \xi_\lambda(s,x)}}{\abs{t-s}^{1/2}}\quad\text{and}\quad B:=\sup_{\substack{(t,x), (s,x)\in C\\t \neq s}} \frac{\abs{\vec \xi_\lambda(t,x)-\vec \xi_\lambda(s,x)}}{\abs{t-s}^{(2-\beta)/2}}.
\]
For any $\epsilon>0$, we claim that
\begin{equation}			\label{eq1150sun}
A \le \epsilon B + N(\beta) \epsilon^{-1/(1-\beta)}\, \sup_{C}\, \abs{\vec \xi_\lambda}.
\end{equation}
We can choose $(t,x)$, $(s,x) \in C$ be such that
\[
\frac12 A \le \frac{\abs{\vec \xi_\lambda(t,x)-\vec \xi_\lambda(s,x)}}{\abs{t-s}^{1/2}} \le \abs{t-s}^{(1-\beta)/2} B.
\]
If $\abs{t-s} \le (\epsilon/2)^{2/(1-\beta)}$, we have $A \le \epsilon B$, and \eqref{eq1150sun} is true.
Otherwise, we have
\[
A \le 4 \abs{t-s}^{-1/2}\,\sup_{C}\, \abs{\vec \xi_\lambda} \le 4(\epsilon/2)^{-1/(1-\beta)}\, \sup_{C}\, \abs{\vec \xi_\lambda}.
\]
We have proved the assertion \eqref{eq1150sun}.
Now, the lemma follows by taking $\epsilon=\lambda^{-(1-\beta)/2}$ in \eqref{eq1150sun}, and utilizing \eqref{eq1210sun} and \eqref{eq1211sun}.
\end{proof}

Using Lemmas \ref{lem2036sat} and \ref{lip_one_half}, for any $\varepsilon \in (0,\frac12)$, there exists $\lambda$ sufficiently large such that any $(t,x) \in \bR^d$, we have
\begin{equation}			\label{eq1255mon}
\abs{D \vec \xi_\lambda(t,x)} \le \varepsilon,
\end{equation}
and for any $s$, $t \in \mathbb{R}$, with $\abs{s-t} \le 1$, and $x\in \bR^d$, we have
\[
\abs{\vec \xi_\lambda(t,x)-\vec \xi_\lambda(s,x)} \le \varepsilon \abs{t-s}^{1/2}.
\]
Then, with this choice of $\lambda$, the function $\vec \phi_\lambda(t,x)=x+\vec \xi_\lambda(t,x)$ satisfies
\begin{equation}			\label{eq1309sun}
(1-\varepsilon)\abs{x-y} \le \abs{\vec \phi_\lambda(t,x)-\vec \phi_\lambda(t,y)} \le (1+\varepsilon) \abs{x-y},\quad \forall (t,x)\in \bR^{d+1},
\end{equation}
and for any $s$, $t \in \mathbb{R}$, with $\abs{s-t} \le 1$, the following holds:
\begin{equation}			\label{eq1359sat}
\sup_{x \in \bR^d}\, \abs{\vec \phi_\lambda(t,x)-\vec \phi_\lambda(s,x)} \le \varepsilon \abs{t-s}^{1/2}.
\end{equation}
Since $D\vec \phi_\lambda(t,x)=I+D\vec \xi_\lambda(t,x)$, we may assume that $\lambda$ is chosen such that for each $(t,x) \in \bR^{d+1}$, we have
\begin{equation}			\label{eq2118sat}
\frac12 \le \det(D \vec\phi_\lambda(t,x)) \le 2.
\end{equation}

It is evident that for each $t\in \bR$, the mapping $\vec \phi_\lambda(t,\cdot)$ is a $C^1$ diffeomorphism on $\bR^d$.
Denote by $\vec \psi_\lambda(t,\cdot)$ the inverse of $\vec \phi_\lambda(t,\cdot)$ on $\bR^d$.
Note that
\[
\vec \phi_\lambda(t, \vec \psi_\lambda(t,x))-\vec \phi_\lambda(s, \vec \psi_\lambda(t,x))=\vec \phi_\lambda(s, \vec \psi_\lambda(s,x))-\vec \phi_\lambda(s, \vec \psi(t,x))
\]
and by \eqref{eq1359sat} and \eqref{eq1309sun} that
\[
\abs{{\rm LHS}} \le \varepsilon \abs{t-s}^{1/2},\quad \abs{{\rm RHS}} \ge (1-\varepsilon) \abs{\vec \psi_\lambda(s,x)-\vec \psi_\lambda(t,x)}.
\]
Therefore, for any $s$, $t \in \mathbb{R}$, with $\abs{s-t} \le 1$, the following holds:
\begin{equation}			\label{eq1448mon}
\abs{\vec \psi_\lambda(t,x)-\vec \psi_\lambda(s,x)} \le \varepsilon(1-\varepsilon)^{-1} \abs{t-s}^{1/2}.
\end{equation}

\begin{lemma}			
Let $\vec\Phi_\lambda(t,x)=(t, \vec \phi_\lambda(t,x))$ and $\vec\Psi_\lambda(t,x)=(t, \vec \psi_\lambda(t,x))$.
Let $I=(t_0-r^2,t_0)$ and $y_0=\vec \phi_\lambda(t_0, x_0)$.
For any $\alpha>0$, we have
\begin{align}			\label{eq1611mon}
\vec \Phi_\lambda(I \times B_{\alpha r}(x_0)) &\subset I \times B_{(\alpha(1+\varepsilon)+\varepsilon) r}(y_0),\\
					\label{eq1612mon}
\vec \Psi_\lambda(I \times B_{\alpha r}(y_0)) &\subset I \times B_{(1-\varepsilon)^{-1}(\alpha+\varepsilon) r}(x_0).
\end{align}
\end{lemma}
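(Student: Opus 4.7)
The plan is to prove both inclusions by the same two-step triangle inequality argument. The inclusion on the time factor $I$ is automatic since both $\vec\Phi_\lambda$ and $\vec\Psi_\lambda$ preserve the time coordinate, so only the spatial inclusion needs work. In each case I would split the displacement from the fixed reference point ($y_0$ for the first inclusion, $x_0$ for the second) into a purely spatial piece, controlled by the bi-Lipschitz estimate \eqref{eq1309sun}, and a purely temporal piece, controlled by the Hölder-$1/2$-in-time estimates \eqref{eq1359sat} and \eqref{eq1448mon}. Note that for $(t,x)\in I\times B_{\alpha r}(x_0)$ we have $\abs{t-t_0}<r^2\le 1$ (since the context allows us to assume $r\le 1$), so those temporal estimates are applicable.

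For the first inclusion, take $(t,x)\in I\times B_{\alpha r}(x_0)$ and use $y_0=\vec\phi_\lambda(t_0,x_0)$ to write
\[
\abs{\vec\phi_\lambda(t,x)-y_0}\le \abs{\vec\phi_\lambda(t,x)-\vec\phi_\lambda(t,x_0)}+\abs{\vec\phi_\lambda(t,x_0)-\vec\phi_\lambda(t_0,x_0)}.
\]
By \eqref{eq1309sun} the first term is at most $(1+\varepsilon)\abs{x-x_0}<(1+\varepsilon)\alpha r$, and by \eqref{eq1359sat} the second is at most $\varepsilon\abs{t-t_0}^{1/2}\le \varepsilon r$. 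Summing gives the radius $(\alpha(1+\varepsilon)+\varepsilon)r$ stated in the first inclusion.

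For the second inclusion, the key observation is that $x_0=\vec\psi_\lambda(t_0,y_0)$, so for $(t,y)\in I\times B_{\alpha r}(y_0)$ I split
\[
\abs{\vec\psi_\lambda(t,y)-x_0}\le \abs{\vec\psi_\lambda(t,y)-\vec\psi_\lambda(t,y_0)}+\abs{\vec\psi_\lambda(t,y_0)-\vec\psi_\lambda(t_0,y_0)}.
\]
To bound the spatial piece, I would apply \eqref{eq1309sun} to the pair $\vec\phi_\lambda(t,\vec\psi_\lambda(t,y))=y$ and $\vec\phi_\lambda(t,\vec\psi_\lambda(t,y_0))=y_0$, which forces $\vec\psi_\lambda(t,\cdot)$ to be $(1-\varepsilon)^{-1}$-Lipschitz and yields the bound $(1-\varepsilon)^{-1}\alpha r$. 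The temporal piece is bounded directly by \eqref{eq1448mon} by $\varepsilon(1-\varepsilon)^{-1}\abs{t-t_0}^{1/2}\le \varepsilon(1-\varepsilon)^{-1}r$, and summing gives the radius $(1-\varepsilon)^{-1}(\alpha+\varepsilon)r$. There is no serious obstacle: the entire argument is a bookkeeping consequence of the bi-Lipschitz and temporal-Hölder estimates already established, and the only ingredient one might briefly worry about — the Lipschitz behaviour of $\vec\psi_\lambda(t,\cdot)$ in the spatial variable — is read off immediately from \eqref{eq1309sun} by evaluating it at inverse points.
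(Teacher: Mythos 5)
Your proof is correct and matches the paper's own argument essentially line for line: the same triangle-inequality decomposition around $y_0=\vec\phi_\lambda(t_0,x_0)$ (resp.\ $x_0=\vec\psi_\lambda(t_0,y_0)$), with the spatial piece controlled by \eqref{eq1309sun} and the temporal piece by \eqref{eq1359sat} (resp.\ \eqref{eq1448mon}). The only difference is that you spell out explicitly why $\vec\psi_\lambda(t,\cdot)$ is $(1-\varepsilon)^{-1}$-Lipschitz, which the paper leaves implicit; this is a harmless and sensible addition.
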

\begin{proof}
For any $(t,x) \in I \times B_{\alpha r}(x_0)$, it follows that
\begin{align*}
\abs{\vec \phi_\lambda(t,x)-y_0} &\le \abs{\vec \phi_\lambda(t,x)-\vec \phi_\lambda(t, x_0)}+\abs{\vec \phi_\lambda(t,x_0)-\vec \phi_\lambda(t_0, x_0)}\\
&\le (1+\varepsilon) \abs{x-x_0}+\varepsilon\abs{t-t_0}^{1/2} \le (1+\varepsilon)\alpha r+\varepsilon r,
\end{align*}
utilizing the inequalities \eqref{eq1359sat} and \eqref{eq1309sun}.
Consequently, this leads to
\[
\abs{\vec\Phi_\lambda(t,x)-(t,y_0)}_p < (1+\varepsilon)\alpha r+\varepsilon r,
\]
resulting in \eqref{eq1611mon}.
Similarly, taking into account that $\vec\psi(t_0, y_0)=x_0$, and utilizing the inequalities \eqref{eq1309sun} and \eqref{eq1448mon}, for any $(t,y) \in I \times B_{\alpha r}(y_0)$, we have
\begin{align*}
\abs{\vec \psi_\lambda(t,y)-x_0} &\le \abs{\vec \psi_\lambda(t,y)-\vec \psi_\lambda(t, y_0)}+\abs{\vec \psi_\lambda(t,y_0)-\vec \psi_\lambda(t_0, y_0)}\\
&\le (1-\varepsilon)^{-1} \abs{y-y_0}+\varepsilon(1-\varepsilon)^{-1}\abs{t-t_0}^{1/2} \\
&\le (1-\varepsilon)^{-1}\alpha r+\varepsilon(1-\varepsilon)^{-1}r.
\end{align*}
This implies \eqref{eq1612mon}.
\end{proof}

Observe that with $\vec \phi(t,x)=\vec \phi_\lambda(t,x)=x+\vec \xi_\lambda(t,x)$, we have
\begin{equation}			\label{eq2210mon}
\mathscr{L}\phi^k=\mathscr{L}(x_k+ \xi_\lambda^k)= b^k+ \mathscr{L} \xi_\lambda^k=b^k+ (\mathscr{L} -\lambda) \xi_\lambda^k + \lambda \xi_\lambda^k=\lambda \xi_\lambda^k.
\end{equation}
Therefore, by \eqref{eq2135mon} and \eqref{eq2210mon}, the identity \eqref{eq2207mon} becomes
\begin{multline}			\label{eq2224mon}
I=\iint_{\bR^{d+1}} u(t,x) \left\{\partial_t \eta(t,\vec\phi(t,x))+ \tilde a^{kl}(t,x) D_{kl} \eta(t,\vec\phi(t,x))+ \tilde b^k(t,x) D_k \eta(t,\vec\phi(t,x)) \right.\\
\left. -c(t,x)\eta(t,\vec\phi(t,x))  \right\}dxdt=0,
\end{multline}
where  $\tilde b^k=\lambda \xi^k_\lambda$.
Then, by the change of variables $x=\vec \psi(t,y)=\vec \psi_\lambda(t,y)$, defining
\begin{equation}			\label{eq2046sun}
\hat u(t,y):= u(t,\vec\psi(t,y)) \,\abs{\det D\vec\psi(t,y)}=u(t,x) \,\abs{\det D\vec\psi(t,\vec\phi(t,x))},
\end{equation}
and setting (recall definition \eqref{eq2137mon})
\begin{equation}			\label{eq1430sun}
\begin{aligned}
\hat a^{kl}(t,y) &:= \tilde a^{kl}(t,\vec \psi(t,y))=\tilde a^{kl}(t,x)=a^{ij}(t,x) D_i\phi^k(t,x) D_j \phi^l(t,x),\\
\hat b^{k}(t,y) &:= \tilde b^{k}(t,\vec \psi(t,y))=\tilde b^{k}(t,x)=\lambda \xi_\lambda^k(t,x),\\
\hat c(t,y) &:= c(t,\vec \psi(t,y))=c(t,x),
\end{aligned}
\end{equation}
we obtain
\[
0=\iint_{\bR^{d+1}} \hat u(t,y)\left\{\partial_t \eta(t,y)+ \hat a^{kl}(t,y) D_{kl} \eta(t,y)+ \hat b^k(t,y) D_k \eta(t,y)-\hat c(t,y)\eta(t,y)  \right\}dydt.
\]
Therefore, the identity \eqref{eq2224mon} becomes
\[
I=\iint_{\bR^{d+1}} \hat u (\partial_t +\hat a^{ijl}D_{ij} + \hat b^i D_i-\hat c) \eta =0,
\]
so that $\hat u$ is a solution to
\[
-\partial_t \hat u +D_{ij}(\hat a^{ij} \hat u) -D_i(\hat b^i \hat u)- \hat c \hat u=0\quad \text{in }Q':=\vec\Psi(Q).
\]

\begin{remark}			\label{rmk_b}
It is evident form \eqref{eq1430sun} that $\hat{\mathbf A}=(\hat a^{ij})$ satisfies Condition \ref{cond1}, as $\vec \phi(t,x)=x+\vec \xi_\lambda(t,x)$ and $\vec \xi_\lambda$ satisfies the estimate \eqref{eq1255mon}; see \cite[Lemma 3.4]{DEK21}.
We also note that $\hat{\vec b}=(\hat b^1,\ldots, \hat b^d)$ is bounded and it is Lipschitz in $x$.
Additionally, $\hat c$ is bounded and it is H\"older continuous in $x$ under the condition that $c$ possesses the same property.
This condition is certain satisfied when $c$ undergoes regularization as outlined in \ref{sec3.1}.
Invoking Remark~\ref{rmk_c}, we note that
\begin{gather*}
\sup_{\bR^{d+1}}\left(\abs{\hat{\vec b}}+ \abs{D \hat{\vec b}}\right) \le \lambda \sup_{\bR^{d+1}}\left(\abs{\vec \xi_\lambda}+ \abs{D \vec \xi_\lambda}\right) \le N,\\
\sup_{\bR^{d+1}}\,\abs{\hat c}\le \sup_{\bR^{d+1}}\,\abs{\bar c}\le N,\\
\sup_{t \in \bR} \, [\hat c(t,\cdot)]_{\mathscr{C}^{1-\beta}(\bR^d)} \le [\bar c]_{\mathscr{C}^{(1-\beta)/2,1-\beta}(\mathbb{R}^{d+1})}\, \norm{D \vec \psi}_{L_\infty(\bR^{d+1})}^{1-\beta}  \le N,
\end{gather*}
where $N$ depends only on $d$, $\delta$, $\omega_{\mathbf A}^{\mathsf x}$, $p$, $q$, $\beta$, $\mathfrak{b}$, $\mathfrak{c}$, and $\varepsilon \in (0,\frac12)$.
\end{remark}

\subsection{Absorbing lower order coefficients into the principal coefficients}	\label{sec3.3}
Let $u$ be a solution to
\[
\mathscr{L}^*u-cu=-\partial_t u+\sum_{i,j=1}^d D_{ij}(a^{ij}u)-\sum_{i=1}^d D_i(b^i u)-cu=0 \;\text{ in }\;Q\subset \bR^{d+1},
\]
where $\vec b=(b^1,\ldots, b^d)$ and $c$ are bounded functions that are H\"older continuous in $x$.
More precisely, we assume that there exist constants $\tilde{\mathfrak b}$ and $\tilde{\mathfrak c}$ such that
\begin{equation}			\label{eq2139sun}
\norm{\vec b}_{L_\infty(\bR^{d+1})} + \sup_{t\in \bR}\, [\vec b(t,\cdot)]_{\mathscr{C}^{1-\beta}(\bR^d)} \le \tilde{\mathfrak b}, \quad 
\norm{c}_{L_\infty(\bR^{d+1})} + \sup_{t\in \bR}\, [c(t,\cdot)]_{\mathscr{C}^{1-\beta}(\bR^d)} \le \tilde{\mathfrak c}.
\end{equation}

It is worth noting that by introducing a new variable $y\in \bR$, we can express
\[
-\sum_{i=1}^d D_i (b^i  u) - c u= -\frac12 \sum_{i=1}^d \partial_{y} \partial_{x_i} (y b^i  u)-\frac12\sum_{i=1}^d \partial_{x_i} \partial_y (y b^i  u) -\frac12 \partial_y \partial_y (y^2 cu).
\]
This leads us to define $\tilde{\mathbf A}=(\tilde a^{ij})$ as follows.
Let $\varrho$ be any positive number and let $\varphi:\bR \to \bR$ be a smooth function such that
\[
\varphi(y)=y\;\text{ for }\; \abs{y}\le \varrho, \quad \varphi(y)=\varrho+1 \;\text{ for }\; \abs{y} \ge \varrho+2,\quad \abs{\varphi'(y)} \le 1.
\]
For $(t,x,y)\in \bR\times \bR^d \times \bR=\bR^{d+2}$, we define
\begin{equation}				\label{eq1909tue}
\begin{aligned}
\tilde a^{ij}(t,x,y)&=a^{ij}(t,x) \qquad (i,j=1,\ldots d),\\
\tilde a^{i, d+1}(t,x,y)&=\tilde a^{d+1,i}(t,x,y)=- \tfrac12  \varphi(y) b^i(t,x) \qquad (i=1,\ldots, d),\\
\tilde a^{d+1,d+1}(t,x,y)&=-\tfrac12 \varphi(y)^2 c(t,x)+\mu,
\end{aligned}
\end{equation}
where $\mu$ is a constant to be fixed shortly.
Then, setting $x_{d+1}=y$, we deduce that $u(t,x,y)=u(t,x)$ satisfies
\[
-\partial_t  u + \sum_{i,j=1}^{d+1} D_{ij}(\tilde a^{ij} u)=0 \quad\text{in }\tilde Q:=Q \times (-\varrho,\varrho) \subset \bR^{d+2}.
\]

We will show that $\tilde{\mathbf A}$ satisfies Condition \ref{cond1}.
Utilizing \eqref{eq2139sun}, we obtain
\begin{align*}
\sum_{i,j=1}^{d+1} \tilde a^{ij} \xi_i \xi_j &= \sum_{i,j=1}^{d}  a^{ij} \xi_i \xi_j - \sum_{i=1}^d \varphi(y) b^i \xi_i \xi_{d+1} - \frac12 \varphi(y)^2 c \xi_{d+1}^2+\mu\xi_{d+1}^2 \\
&\ge \delta \sum_{i=1}^d \xi_i^2 - (\varrho+1) \tilde{\mathfrak b} \abs{\xi_{d+1}} \left(\sum_{i=1}^d \xi_i^2\right)^{1/2}-\frac12 (\varrho+1)^2 \tilde{\mathfrak c}  \xi_{d+1}^2 + \mu \xi_{d+1}^2\\
&\ge \delta \sum_{i=1}^{d+1} \xi_i^2 - \epsilon \sum_{i=1}^d \xi_i^2 - \frac{1}{4\epsilon} (\varrho+1)^2 \tilde{\mathfrak b}^2 \xi_{d+1}^2- \frac{1}{2} (\varrho+1)^2 \tilde{\mathfrak c}^2 \xi_{d+1}^2+\mu \xi_{d+1}^2,
\end{align*}
where we used the Cauchy's inequality.
Therefore, by taking $\epsilon=\frac12 \delta$, and then choosing
\[
\mu=\frac12 \delta+\frac{(\varrho+1)^2}{2\delta} \left(\tilde{\mathfrak b}^2+\delta \tilde{\mathfrak c}^2\right),
\]
we see that $\tilde{\mathbf A}=(\tilde a^{ij})$ satisfies
\[
\sum_{i,j=1}^{d+1} \tilde a^{ij}(t,x,y) \xi_i \xi_j \ge \frac{\delta}{2} \sum_{i=1}^{d+1}\xi_i^2,\quad \forall (t,x,y)\in \bR^{d+2},\;\;\forall \xi \in \bR^{d+1}.
\]
Moreover, it is clear that $\tilde a^{ij}$ are bounded on $\bR^{d+2}$.
Therefore, there exists $\tilde \delta \in (0,1]$ such that the eigenvalues of $\tilde{\mathbf A}$ lie in the interval $[\tilde \delta, 1/\tilde \delta]$, and $\tilde\delta$ is completely determined by $\delta$, $\tilde{\mathfrak b}$, $\tilde{\mathfrak c}$, and $\varrho$.
Also, it is evident from \eqref{eq1909tue} that $\tilde{\mathbf A}$ is of Dini mean oscillation in $(x,y)$, and $\omega_{\tilde{\mathbf A}}^{\mathsf x}$ is controlled by $\omega_{\mathbf A}^{\mathsf x}$, $\tilde{\mathfrak b}$, $\tilde{\mathfrak c}$, and $\varrho$.

\begin{conclusion}			\label{cond1prime}
We have verified that $\tilde{\mathbf A}$ satisfies Condition \ref{cond1} in $\bR^{d+2}$, with $\tilde \delta$ replacing $\delta$.
Moreover, we have observed that $\tilde\delta$ depends on $\delta$, $\tilde{\mathfrak b}$, $\tilde{\mathfrak c}$, and $\varrho$, and that $\omega_{\tilde{\mathbf A}}^{\mathsf x}$ is controlled by $\omega_{\mathbf A}^{\mathsf x}$, $\tilde{\mathfrak b}$, $\tilde{\mathfrak c}$, and $\varrho$.
\end{conclusion}

\begin{remark}			\label{rmk_a}
If $\vec b$ and $c$ are obtained via the regularization processes described in Section \ref{sec3.1} and \ref{sec3.2}, then $\tilde{\mathfrak b}$ and $\tilde{\mathfrak c}$ are both bounded by a constant $N$ depending by $d$, $\delta$, $\omega_{\mathbf A}^{\mathsf x}$, $p$, $q$, $\beta$, $\mathfrak{b}$, $\mathfrak{c}$, and $\varepsilon \in (0,\frac12)$. Refer to Remarks \ref{rmk_c} and \ref{rmk_b}.
\end{remark}

\subsection{Proof of Harnack inequality}
The subsequent proposition is an extension of Theorem~\ref{thm:harnack} to an operator with sufficiently regular lower order coefficients.
\begin{proposition}			\label{prop1}
Assume $\mathbf A$ satisfies Condition \ref{cond1}, and $\vec b$ and $c$ satisfy \eqref{eq2139sun}.
Let $R>0$ be a fixed number, $0<r<R/2$, and $(t_0, x_0) \in \bR^{d+1}$.
Suppose $u \in L_1(C_{2r})$ is a nonnegative weak solution of
\[
-\partial_tu +\dv^2(\mathbf{A} u)-\dv(\vec b u)-cu = 0 \;\text{ in }\;C_{2r}=C_{2r}(t_0, x_0).
\]
Then, we have
\[
\sup_{(t_0-3r^2, t_0-2r^2)\times B_{r}(x_0)} u \le N \inf_{(t_0-r^2, t_0)\times B_{r}(x_0)} u,
\]
where $N$ is a constant depending only on $d$, $\delta$, $\omega_{\mathbf A}^{\textsf x}$, $\tilde{\mathfrak b}$, $\tilde{\mathfrak c}$, and $R$.
\end{proposition}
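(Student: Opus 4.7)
The plan is to apply the dimensional-lifting trick developed in Section~\ref{sec3.3}: absorb the lower-order coefficients $\vec b$ and $c$ into a new principal coefficient matrix $\tilde{\mathbf A}$ on $\bR^{d+2}$, obtaining an equation without lower-order terms, and then invoke Theorem~\ref{thm:harnack} directly for the lifted solution.

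Concretely, I choose the cutoff parameter $\varrho$ from Section~\ref{sec3.3} to depend only on $R$; setting $\varrho = R$ ensures $2r < \varrho$ for every admissible $r \in (0, R/2)$. With this choice, the coefficients $\tilde a^{ij}$ defined in \eqref{eq1909tue} satisfy Condition~\ref{cond1} on $\bR^{d+2}$ with new parameters $\tilde\delta$ and $\omega_{\tilde{\mathbf A}}^{\mathsf x}$ controlled by $\delta$, $\omega_{\mathbf A}^{\mathsf x}$, $\tilde{\mathfrak b}$, $\tilde{\mathfrak c}$, and $R$, as recorded in Conclusion~\ref{cond1prime}. I then extend $u$ trivially in the new variable $y$, defining $\tilde u(t,x,y) := u(t,x)$ on $\tilde Q := C_{2r}(t_0,x_0) \times (-\varrho, \varrho)$. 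Using that $\varphi(y) = y$ for $|y| < \varrho$, the algebraic identity from Section~\ref{sec3.3} (combined with an integration by parts in $y$ after pairing against any $\tilde\eta \in C^\infty_c(\tilde Q)$ and using the weak formulation of the original equation on the averaged test function $\eta(t,x) := \int \tilde\eta(t,x,y)\,dy$) verifies that $\tilde u$ is a nonnegative weak solution of
\[
-\partial_t \tilde u + \sum_{i,j=1}^{d+1} D_{ij}(\tilde a^{ij}\tilde u) = 0 \quad\text{in } \tilde Q,
\]
and clearly $\tilde u \in L_1(\tilde Q)$ since the $y$-extent is finite and $u \in L_1(C_{2r})$.

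Next I consider the parabolic cylinder $(t_0 - 4r^2, t_0] \times B_{2r}((x_0,0))$ in $\bR^{d+2}$, where $B_{2r}((x_0,0))$ is the Euclidean ball of radius $2r$ in $\bR^{d+1}$ about $(x_0,0)$. Since $B_{2r}((x_0,0)) \subset B_{2r}(x_0) \times (-2r,2r) \subset \tilde Q$, Theorem~\ref{thm:harnack} applies to $\tilde u$ at radius $r$ and yields
\[
\sup_{(t_0-3r^2,\,t_0-2r^2) \times B_r((x_0,0))} \tilde u \le N \inf_{(t_0-r^2,\,t_0) \times B_r((x_0,0))} \tilde u,
\]
with $N$ depending only on $d$, $\tilde\delta$, $\omega_{\tilde{\mathbf A}}^{\mathsf x}$, and $R$, hence on the parameters allowed in the statement. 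Since $\tilde u$ is independent of $y$, and the projection of $B_r((x_0,0)) \subset \bR^{d+1}$ onto its first $d$ coordinates is precisely $B_r(x_0) \subset \bR^d$, the $\sup$ and $\inf$ of $\tilde u$ over the lifted cylinders coincide with the corresponding $\sup$ and $\inf$ of $u$ appearing in the conclusion. The only technical delicacy is bookkeeping the dependence of $\tilde\delta$ and $\omega_{\tilde{\mathbf A}}^{\mathsf x}$ on $\varrho$ and committing to $\varrho$ as a function of $R$ at the outset; there is no further analytic obstacle.
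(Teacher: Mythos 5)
Your proposal is correct and follows essentially the same route as the paper: lift to $\bR^{d+2}$ with $\varrho=R$, observe via Conclusion~\ref{cond1prime} that $\tilde{\mathbf A}$ satisfies Condition~\ref{cond1}, apply Theorem~\ref{thm:harnack} to the $y$-independent extension $\tilde u$ on the $(d+1)$-dimensional Euclidean ball, and project back. The only (inconsequential) difference is that you spell out the weak-solution verification via the averaged test function $\eta = \int \tilde\eta\,dy$, whereas the paper leaves this implicit in its reference to Section~\ref{sec3.3}.
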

\begin{proof}
We can assume that $(t_0,x_0)=(0,0)$.
Denote $\mathscr{C}_{r}=(-r^2, 0]\times \mathscr{B}_r$, where $\mathscr{B}_r$ is the $(d+1)$-dimensional Euclidean ball centered at the origin with radius $r$.
By the process described in Section~\ref{sec3.3}, with $\varrho=R$, we deduce that $u(t,x,y)=u(t,x)$ satisfies
 $-\partial_t u +\dv^2(\tilde{\mathbf A} u)=0$ in $\mathscr{C}_{2r}$.
Therefore, by Theorem~\ref{thm:harnack} and Conclusion~\ref{cond1prime}, we have
\[
\sup_{(-3r^2,-2r^2)\times \mathscr{B}_r} u \le N \inf_{(-r^2, 0)\times \mathscr{B}_r} u,
\]
where $N$ is a constant depending solely on $d$, $\delta$, $\omega_{\mathbf A}^{\textsf x}$, $\tilde{\mathfrak b}$, $\tilde{\mathfrak c}$, and $R$.
The proposition follows from the preceding inequality as $u$ is independent of $y$.
\end{proof}

We are now in the position to prove our main theorem, Theorem~\ref{thm_main}.
We can assume that $(t_0,x_0)=(0,0)$ and $R=1$.
By applying the regularization procedure in Section \ref{sec3.1}, we deduce that $\bar u =\zeta u \in L_1(C_{2r})$, and it is a weak solution of
\[
-\partial_t \bar u + \dv^2(\mathbf A \bar u)- \dv(\bar{\vec b} \bar u)-\bar c \bar u=0\;\text{ in }\;C_{2r},
\]
where $\bar{\vec b}$ and $\bar c$ satisfying the condition specified in Remark~\ref{rmk_c}.

As $1/2 \le \zeta \le 2$ by \eqref{eq1934sat}, we find that if $\bar u$ satisfies the Harnack inequality \eqref{harnack}, then $u$ satisfies the same inequality, with $4N$ replacing $N$.
Therefore, it is enough to consider the case when $c$ is bounded and Lipschitz continuous in $x$.

Next, recall the mappings $\vec\phi=\vec\phi_\lambda$ and $\vec\psi=\vec\psi_\lambda$ from Section \ref{sec3.2}.
We choose sufficiently large value for $\lambda$ to ensure the satisfaction of  \eqref{eq1309sun} and \eqref{eq1359sat} with $\varepsilon=1/3$, along with the fulfillment of \eqref{eq2118sat}.
Then,
\[
\hat u(t,y)= u(t,\vec\psi(t,y)) \,\abs{\det D\vec\psi(t,y)}=u(t,x) \,\abs{\det D\vec\psi(t,\vec\phi(t,x))},
\]
as defined in \eqref{eq2046sun}, is a weak solution of
\[
-\partial_t \hat u +D_{ij}(\hat a^{ij} \hat u) -D_i(\hat b^i \hat u)- \hat c \hat u=0\quad \text{in } \vec\Psi(C_{4r}),
\]
where $\hat a^{ij}$, $\hat b^i$, $\hat c$ are as defined in \eqref{eq1430sun}, and $\vec\Psi(t,y)=(t, \vec\psi(t,y))$.

By Remarks \ref{rmk_b} and \ref{rmk_a}, 
we deduce that the coefficients $\hat{\mathbf A}$, $\hat{\vec b}$, and $\hat c$ satisfy 
the hypothesis of Proposition \ref{prop1}.
It follows from \eqref{eq1611mon} that
\[
(-16r^2, 0] \times B_{2r}(y_0) \subset  \vec\Psi(C_{4r}),\quad y_0=\vec\phi_\lambda(0,0).
\]
In particular, we deduce that $C_{2r}(0,y_0) \subset \vec\Psi(C_{4r})$.
Therefore, by Proposition \ref{prop1}, we have
\[
\sup_{(-3r^2, -2r^2)\times B_{r}(y_0)} \hat{u} \le N \inf_{(-r^2, 0)\times B_{r}(y_0)} \hat{u}.
\]
From \eqref{eq2118sat} and \eqref{eq2046sun}, we obtain
\[
\sup_{\vec \Phi\left((-3r^2, -2r^2)\times B_{r}(y_0)\right)} u \le 4N \inf_{\vec \Phi\left((-r^2, 0)\times B_{r}(y_0)\right)} u.
\]
It is worth noting that from \eqref{eq1612mon}, we can infer the inclusion relationship:
\begin{align*}
(-3r^2, -2r^2) \times B_{r/3} &\subset \Phi\left((-3r^2, -2r^2)\times B_{r}(y_0)\right),\\
(-r^2, 0) \times B_{r/3} &\subset \Phi\left((-r^2, 0)\times B_{r}(y_0)\right).
\end{align*}
Certainly, the theorem is a straightforward consequence of the simple observation 
that the inclusion $A\subset B$ implies the inequalities
\[
\sup_{A} u \le \sup_{B}u\quad\text{and}\quad  \inf_{B} u \le \inf_{A}u.\eqno\qed
\]

\section{Harnack inequality for elliptic equations in double divergence form}		\label{sec4}
In this section, we discuss the Harnack inequality for elliptic equations in double divergence form.
Let $L$ be an elliptic operator in non-divergence form defined as follows:
\[
Lv:=a^{ij}D_{ij}v+b^i v.
\]
We are interested in the adjoint operator of $L$, denoted $L^*$, which is given by:
\[
L^*u=D_{ij}(a^{ij}u)-D_i(b^i u).
\]
Let $c$ be a real-valued function on $\mathbb{R}^d$.
Our objective is to present Harnack's inequality for nonnegative solutions $u$ to the elliptic equation in double divergence form:
\[
L^*u-cu=0.
\]

\begin{condition}			\label{cond1e}
The principal coefficient matrix $\mathbf A=(a^{ij})$ is symmetric and there is a constant $\delta \in (0,1]$ such that the eigenvalues of $\mathbf A(x)$ are in $[\delta, 1/\delta]$ uniformly in $x \in \mathbb{R}^{d}$.
Moreover, $\mathbf{A}$ is of Dini mean oscillation, that is,
\[
\int_0^{1} \frac{\omega_{\mathbf A}(t)}{t}\,dt <\infty,
\]
where the mean oscillation function $\omega_{\mathbf A}: \bR_+ \to \bR$ is defined by
\[
\omega_{\mathbf A}(r):=\sup_{x\in \mathcal{B}} \fint_{\mathcal{B} \cap B_r(x)} \,\abs{\mathbf A(y)-\bar {\mathbf A}_{x,r}}\,dy, \;\; \text{where }\bar{\mathbf A}_{x,r} :=\fint_{\mathcal{B} \cap B_r(x)} \mathbf A.
\]
\end{condition}

\begin{condition}			\label{cond2e}
The lower-order coefficients $\vec b=(b^1,\ldots, b^d)$ and $c$ 
are such that $\vec b \in L_{p_0, \rm loc}(\bR^d)$ 
and $c \in L_{p_0/2, \rm{loc}}(\bR^d)$ for some $p_0>d$.
\end{condition}

The following theorem serves as an elliptic counterpart 
to our main result, enhancing Theorem 3.5 in \cite{BRS23} by requiring 
only $c \in L_{p, \rm{loc}}$ for $p>d/2$.

\begin{theorem}		\label{thm2}
Assume Conditions \ref{cond1e} and \ref{cond2e} hold.
Let $R>0$ be a fixed number, $0<r<R/4$, and $x_0 \in \bR^{d}$.
Denote $B_{r}= B_{r}(x_0)$. 
Suppose $u \in L_1(B_{4r})$ is a nonnegative solution of
\[
L^* u - cu =0\;\;\text{ in }\;\; B_{4r}.
\]
Then, we have
\begin{equation}			\label{eq1424mon}
\sup_{B_{r/2}} u \le N \inf_{B_{r/2}} u,
\end{equation}
where $N$ is a constant depending only on $d$, $\delta$, $\omega_{\mathbf A}$, $p_0$, $R$, $\norm{\vec b}_{L_{p_0}(B_{R})}$, and $\norm{c}_{L_{p_0/2}(B_{R})}$.
\end{theorem}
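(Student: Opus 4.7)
The plan is to reproduce, in the stationary setting, the three-step reduction occupying Section~\ref{sec3} and then invoke the elliptic Harnack inequality of \cite{DEK18} for nonnegative solutions of $\dv^{2}(\mathbf{A}u)=0$ with $\mathbf{A} \in \mathrm{DMO}$. A preliminary observation is that Condition~\ref{cond2e} yields the Morrey bounds needed to mimic Condition~\ref{cond2}: setting $\beta = d/p_{0} \in (0,1)$ and $\tilde p = \beta p_{0}/(\beta+1) = d p_{0}/(d+p_{0}) < p_{0}/2$, H\"older's inequality on a ball $B_{\rho} \subset B_{R}$ with $\rho \le 1$ gives
\[
\rho^{\beta}\dashnorm{\vec b}_{L_{p_{0}}(B_{\rho})} \le N(d)\norm{\vec b}_{L_{p_{0}}(B_{R})}, \qquad \rho^{\beta+1}\dashnorm{c}_{L_{\tilde p}(B_{\rho})} \le N(d)\norm{c}_{L_{p_{0}/2}(B_{R})},
\]
so after harmless truncation of $\vec b$ and $c$ outside $B_{R}$, they fit into the elliptic analogues of the Morrey spaces $E_{p,\beta}$ and $E_{\tilde p, \beta+1}$ of \cite{Krylov2023}.

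First, I regularize $c$ following Section~\ref{sec3.1}. Using the elliptic analogue of \cite[Theorem~3.5]{Krylov2023}, I solve $(L - c - \lambda)\zeta_{\lambda} = c$ on $\bR^{d}$ with uniform bound $\norm{\lambda\zeta_{\lambda},\, \sqrt{\lambda}D\zeta_{\lambda},\, D^{2}\zeta_{\lambda}}_{E_{\tilde p,\beta+1}} \le N$. The elliptic counterpart of Lemma~\ref{lem2019thu} then yields $\norm{\zeta_{\lambda}}_{L_{\infty}} \le N\lambda^{(\beta-1)/2}$, so for $\lambda$ sufficiently large $\zeta = 1 + \zeta_{\lambda}$ satisfies $1/2 \le \zeta \le 2$ together with $[\zeta]_{\mathscr{C}^{1-\beta}(\bR^{d})} \le N$. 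Then $\bar u = \zeta u$ weakly solves $L^{*}\bar u - \dv(\bar{\vec b}\bar u) - \bar c\bar u = 0$ with $\bar b^{j} = b^{j} + 2a^{ij}D_{i}\zeta/\zeta$ in the same Morrey class as $\vec b$ and $\bar c = -\lambda(\zeta-1)/\zeta$ bounded and uniformly H\"older continuous, exactly as in Remark~\ref{rmk_c}.

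Second, I remove the remaining singular drift via Zvonkin's transform as in Section~\ref{sec3.2}. Solve $(L - \lambda)\vec\xi_{\lambda} = -\bar{\vec b}$ for large $\lambda$ and set $\vec\phi(x) = x + \vec\xi_{\lambda}(x)$. The elliptic analogue of Lemma~\ref{lem2036sat} gives $\norm{D\vec\xi_{\lambda}}_{L_{\infty}} \le N\lambda^{(\beta-1)/2}$; the time-regularity estimate of Lemma~\ref{lip_one_half} is not needed in the stationary case. For $\lambda$ large enough, $\vec\phi$ is a bi-Lipschitz $C^{1}$-diffeomorphism of $\bR^{d}$ with $\det D\vec\phi \in [1/2, 2]$. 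Changing variables, $\hat u(y) := \bar u(\vec\psi(y))\abs{\det D\vec\psi(y)}$ weakly solves $D_{ij}(\hat a^{ij}\hat u) - D_{i}(\hat b^{i}\hat u) - \hat c\hat u = 0$ on $\vec\psi(B_{4r})$, where $\hat{\mathbf A}$ still satisfies Condition~\ref{cond1e}, $\hat{\vec b} = \lambda\vec\xi_{\lambda}\circ\vec\psi$ is bounded Lipschitz, and $\hat c = \bar c\circ\vec\psi$ remains bounded and H\"older continuous (cf.\ Remark~\ref{rmk_b}).

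Finally, I absorb the regularized lower order coefficients into the principal part exactly as in Section~\ref{sec3.3}: introduce $y \in \bR$ and a cut-off $\varphi$ with $\varrho = R$, and define $\tilde a^{ij}$ on $\bR^{d+1}$ as in \eqref{eq1909tue}. Verification of uniform ellipticity via Cauchy's inequality with the same choice of $\mu$ and of the DMO property transfers verbatim, so $U(x,y) := \hat u(x)$ weakly solves $\dv^{2}(\tilde{\mathbf A}U) = 0$ on $\vec\psi(B_{4r}) \times (-R, R)$. Applying the elliptic DMO Harnack inequality of \cite{DEK18} on concentric $(d+1)$-balls, restricting to the $y = 0$ slice, undoing the change of variables via the stationary analogues of \eqref{eq1611mon}--\eqref{eq1612mon}, and invoking $1/2 \le \zeta \le 2$ yields \eqref{eq1424mon} with a constant depending only on the listed parameters. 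The main obstacle is confirming that the elliptic Morrey-space solvability and pointwise estimates underpinning Sections~\ref{sec3.1}--\ref{sec3.2} hold in the stationary setting with the same quantitative form used above; once these are available, either by direct proof or by specializing the parabolic results of \cite{Krylov2023} to time-independent data, the remainder of the argument is a faithful transcription of the proof of Theorem~\ref{thm_main}.
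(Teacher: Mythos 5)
Your argument is correct in outline but takes a genuinely different and substantially longer route than the paper. You reproduce the full three-step reduction from Section~\ref{sec3} — regularize $c$ via $(L-c-\lambda)\zeta_\lambda=c$, Zvonkin-transform away the drift, absorb the resulting bounded H\"older lower-order coefficients into a $(d+1)\times(d+1)$ principal matrix $\tilde{\mathbf A}$ by introducing an extra variable — and finish with the DEK18 Harnack inequality for $\dv^2(\mathbf{A}u)=0$. The paper's proof stops after your first step: it solves $(L-c-\lambda)\zeta_\lambda=c$ on the bounded domain $B_R$ with zero Dirichlet data using the \emph{domain} version in \cite[Theorem 2.5]{Krylov2023e} (getting $W^2_{p_0/2}(B_R)$ estimates rather than global Morrey-space solvability), obtains $\norm{\zeta_\lambda}_{L_\infty(B_R)}\le N\lambda^{-1+d/2p_0}\norm{c}_{L_{p_0/2}(B_R)}$ by interpolation, and then invokes \cite[Theorem 3.5]{BRS23} directly for $\tilde u=\zeta u$: after this single step $\tilde{\vec b}\in L_{p_0}(B_R)$ and $\tilde c\in L_\infty$, which are exactly BRS23's hypotheses. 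Your route buys independence from BRS23 (reducing everything back to DEK18), at the price of needing full elliptic analogues of the Morrey-space results of \cite{Krylov2023} for Sections~\ref{sec3.1}--\ref{sec3.2}, which you correctly flag as the outstanding technical work; the paper sidesteps both the Zvonkin transform and the absorption step, and avoids the global solvability issue by working on $B_R$ with boundary data rather than truncating $\vec b,c$. Both routes are sound, but the paper's is considerably more economical.
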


\begin{proof}
We may assume that $p_0\le 2d$.
We consider $\zeta$ that is of the form $\zeta=1+\zeta_\lambda$, where $\zeta_\lambda$ is a solution to the problem
\begin{equation}			\label{eq1530tue}
(L-c-\lambda) \zeta_\lambda=c \;\text{ in }\; B_R,\quad \zeta_\lambda=0\;\text{ on }\;\partial B_R.
\end{equation}
Note that the assumptions for \cite[Theorem 2.5]{Krylov2023e} are satisfied with $\Omega = B_R$, $p=p_0/2$, $q_b=d$, and $q_c=p_0/2$.
Therefore, by \cite[Theorem 2.5]{Krylov2023e}, there exist constant $\lambda_0\ge 1$ and $N_0$, depending only on $d$, $\delta$, $\omega_{\mathbf A}$, $p_0$, $R$, $\norm{\vec b}_{L_{p_0}(B_R)}$, and $\norm{c}_{L_{p_0/2}(B_R)}$, such that for $\lambda \ge \lambda_0$, there exists a unique solution $\zeta_\lambda \in \mathring W^{2}_{p_0/2}(B_R)$ to the problem \eqref{eq1530tue},
satisfying the estimates
\[
\norm{D^2 \zeta_\lambda}_{L_{p_0/2}(B_R)}+\sqrt{\lambda}\norm{D\zeta_\lambda}_{L_{p_0/2}(B_R)}+\lambda\norm{\zeta_\lambda}_{L_{p_0/2}(B_R)} \le N_0 \norm{c}_{L_{p_0/2}(B_R)}.
\]
In particular, utiilizging $\lambda \ge \lambda_0 \ge 1$, we obtain
\begin{equation}				\label{eq1831tue}
\norm{\zeta_\lambda}_{W^2_{p_0/2}(B_R)} \le N_0 \norm{c}_{L_{p_0/2}(B_R)},\quad \norm{\zeta_\lambda}_{L_{p_0/2}(B_R)} \le N_0 \lambda^{-1} \norm{c}_{L_{p_0/2}(B_R)}. 
\end{equation}
Since $2p_0>d$, a well-known interpolation inequality yields (see \cite[Theorem 5.8]{Adams})
\[
\norm{\zeta_\lambda}_{L_\infty(B_R)} \le N_1 \norm{\zeta_\lambda}_{W^2_{p_0/2}(B_R)}^{d/2p_0}\norm{\zeta_\lambda}_{L_{p_0/2}(B_R)}^{1-d/2p_0},
\]
where $N_1$ depends only on $d$, $p_0$, and $R$.
Therefore, we deduce from \eqref{eq1831tue} that
\[
\norm{\zeta_\lambda}_{L_\infty(B_R)} \le N \lambda^{-1+d/2p_0} \norm{c}_{L_{p_0/2}(B_R)}.
\]
Taking $\lambda$ sufficiently large in the previous inequality, and utilizing the Sobolev embedding, we find that $\zeta=1+\zeta_\lambda$ satisfies
\begin{equation}				\label{eq1544tue}
1/2 \le \zeta \le 2\;\text{ in }\; B_R,\quad \norm{D \zeta}_{L_{p_0}(B_R)} \le N \norm{c}_{L_{p_0/2}(B_R)}.
\end{equation}
We note that $\lambda$ is determined by $d$, $\delta$, $\omega_{\mathbf A}$, $p_0$, $R$, $\norm{\vec b}_{L_{p_0}(B_R)}$, and $\norm{c}_{L_{p_0/2}(B_R)}$.

Then, similar to \eqref{eq2033mon}, the function $\tilde u:= \zeta u$ satisfies
\[
\int_{\bR^{d}} \tilde u \left(L + 2a^{ij}\frac{D_j \zeta}{\zeta} D_i + \lambda \frac{\zeta-1}{\zeta} \right) \eta =0,\quad \forall \eta \in C^\infty_c(B_{4r}).
\]
Therefore, if we define
\[
\tilde b^i:= b^i + 2a^{ij}\frac{D_j \zeta}{\zeta}\quad\text{and}\quad \tilde c:=-\lambda \frac{\zeta-1}{\zeta},
\]
then $\tilde u=\zeta u$ is a solution of
\[
D_{ij}(a^{ij} u)- D_i(\tilde b^i u)-cu=0 \text{ in }B_{4r}.
\]
Note that $\tilde c \in L_\infty(B_R)$ and $\tilde{\vec b} \in L_{p_0}(B_R)$.
Moreover, the following estimates hold:
\begin{equation}			\label{eq2125mon}
\norm{\tilde{\vec b}}_{L_{p_0}(B_R)} \le \norm{\vec b}_{L_{p_0}(B_R)} + 2\delta^{-1}\gamma^{-1}\norm{D \zeta}_{L_{p_0}(B_R)},\quad \norm{\tilde c}_{L_\infty(B_R)} \le \lambda.
\end{equation}
According to \cite[Theorem 3.5]{BRS23},  $\tilde u=\zeta u$ satisfies the Harnack inequality
\[
\sup_{B_{r/2}} \tilde u \le N \inf_{B_{r/2}} \tilde u,
\]
where $N$ depends only on $d$, $\delta$, $\omega_{\mathbf A}$, $p_0$, $R$, $\norm{\tilde{\vec b}}_{L_{p_0}(B_{R})}$, and $\norm{\tilde c}_{L_{p_0}(B_{R})}$.
Therefore, we derive \eqref{eq1424mon} from the previous inequality, along with \eqref{eq1544tue} and \eqref{eq2125mon}.
\end{proof}

\end{document}